\newtheorem{thm}{Theorem}[section]
\newtheorem{lem}[thm]{Lemma}
\newtheorem{cor}[thm]{Corollary}
\def\ep{\varepsilon}
\def\SSN{{\mathbb S}^{N-1}}
\def\S1{{\mathbb S^1}}
\def\RN{\mathbb{R}^N}
\def\RR{\mathbb{R}}
\def\cH{\mathcal{H}}
\def\al{\alpha}
\def\la{\lambda}
\def\ka{\kappa}
\def\pa{\partial}
\def\de{\delta}
\renewcommand{\|}{|}
\newcommand{\cK}{\mathcal K}
\newcommand{\De}{\Delta}
\newcommand{\sums}{\sum\limits}
\newcommand{\si}{\sigma}
\newcommand{\Om}{\Omega}
\newcommand{\acc}{\`}
\newcommand{\RE}{\mathbb R}
\newcommand{\ovr}{\overline}
\newcommand{\ds}{\displaystyle}
\newcommand{\one}{\mathcal{X}}
\title[Characterization of ellipses]{Characterization of ellipses \\
as uniformly dense domains with respect to \\
a family of convex sets}
\author[Magnanini]{Rolando Magnanini}
\address{Dipartimento di Matematica ``U. Dini'', Universit\acc a di Firenze, viale Morgagni 67/A, 50134 Firenze, Italy}
\email{magnanin@math.unifi.it}
\author[Marini]{Michele Marini}
\address{Scuola Normale Superiore, Piazza dei Cavalieri 7, 56126 Pisa, Italy}
\email{michele.marini@sns.it}
\keywords{Uniformly dense domains, convex bodies, affine inequalities}
\subjclass[2010]{52A10, 52A20, 52A39, 52A40.}
\begin{document}

\begin{abstract}
Given $K\subset\RN$ a convex body containing the origin, 
a measurable set $G\subset\RN$ with positive Lebesgue measure is
said to be uniformly $K$-dense if the measure of the
sets $G\cap (x+r\,K)$ is constant on the boundary of $G$ for
any fixed $r>0.$ For $N=2,$ we prove that $G$ is uniformly $K$-dense if 
and only if $K$ and $G$ are homothetic ellipses. Our result improves one
obtained by Amar, Berrone and Gianni in two respects:
it removes the regularity assumptions on $K$ and $G;$ 
by using Minkowski's inequality and an affine inequality,
in the proof it is not necessary to compute higher-order terms in the Taylor expansion near $r=0$ for
the measure of $G\cap (x+r\,K).$ 
\end{abstract}

\maketitle

\section{Introduction}

Let $K$ be a convex body containing the origin of $\RN$ and $G$ be a measurable subset of $\RN$  
with positive Lebesgue measure $V(G).$ 
For each fixed $r>0,$ we define a density function
$\de_K:\RN\times(0,\infty)\rightarrow \RE$ as follows:
\begin{equation}
\label{cdense}
\de_K(x,r)=\frac{V(G\cap (x+r\,K))}{V(r\,K)}, \ x\in\RN.
\end{equation}
Here, $x+r\,K$ denotes the translation by a vector $x$ of
a dilation of $K$ by a factor $r>0$.  
\par
We say that $G$ is {\it uniformly $K$-dense,} or simply {\it $K$-dense} for short, if 
there is a function $c:(0,\infty)\to(0,\infty)$ such that 
$$
\de_K(x,r)=c(r) \ \mbox{ for every } \ (x,r)\in\pa G\times(0,\infty),
$$ 
where $\pa G$ denotes the topological boundary of the set $G$.
\par
When $K$ is the unit ball $B$ of $\RN,$ $K$-dense domains have been studied in
\cite{MPS} in connection with the so-called {\it stationary isothermic surfaces} --- 
the time-invariant level surfaces of solutions of the heat equation. There,
it is proved that a domain $G$ is uniformly dense ($B$-dense in our terminology)
if and only if the solution $U=U(x,t)$
of the following Cauchy problem
\begin{equation}
\label{cauchy}
U_t=\De U \ \mbox{ in } \ \RN\times(0,\infty), \ \ U=\one_G  \ \mbox{ on } \ \RN\times\{0\},
\end{equation}
is such that
$$
U(x,t)=a(t) \ \mbox{ for } \ (x,t)\in\pa G\times(0,\infty),
$$
for some function $a:(0,\infty)\to(0,1)$ (here, $\one_G$ denotes the 
characteristic function of the set $G$). The latter condition qualifies
$\pa G$ as a stationary isothermic surface for $U.$ 
The aforementioned equivalence easily follows from the fact that the solution of \eqref{cauchy}
can be written as
$$
U(x,t)=\frac{|B|}{\pi^{N/2}\sqrt{t}}\,\int_0^\infty \de_B(x,\sqrt{4t}\,\si)\,\si^N\,e^{-\si^2}\,d\si.
$$
\par
The study of stationary isothermic surfaces was motivated by a problem posed
by M.S. Klamkin in \cite{Kl}. Contributions in that field can be found in
\cite{Al1}-\cite{Al2}, \cite{MS1}-\cite{MS2} for the case of the initial-Dirichlet 
boundary value problem, \cite{Sa} for the initial-Neumann 
boundary value problem and in \cite{MS3}-\cite{MS5} for some
generalizations to nonlinear problems.  
\par
Problem \eqref{cauchy} is the simplest setting in which stationary isothermic surfaces have been
considered and their equivalence with $B$-dense domains have been 
instrumental to obtain an almost complete characterization for them. In fact, in \cite{MPS},
it is shown that if $u$ is the solution of \eqref{cauchy} and $\pa G$ is connected, bounded and stationary for $u$, 
then $\pa G$ is a sphere; if $\pa G$ is 
connected, unbounded and stationary, then  
it is a straight line, if $N=2;$ it is either a spherical cylinder or a minimal surface 
(which reduces to a plane, if its total curvature is finite), if $N=3;$ 
its principal curvatures must satisfy certain necessary constraints for $N\ge 4;$
it is also shown that the right helicoid is a stationary isothermic surface with infinite
total curvature. Finally, it is observed in \cite{MPS} that, if $E$ is an {\it ellipsoid,}
then $E$-dense domains are obtained as affine images of $B$-dense ones;
in particular, any bounded $E$-dense domain must be homothetic to $E,$
and hence an ellipsoid itself.
\par
The case of general $K$-dense domains have been considered by
Amar, Berrone and Gianni in \cite{ABG}, when $N=2.$
There, by calculating, for a fixed $x\in G,$ 
the Taylor expansion of the function $\de_K$ in \eqref{cdense} as $r\to 0^+$ up to the third order,
it is proved that, if $\pa G$ is $C^4$-smooth, $\pa K$ is $C^2$-smooth and $G$ is $K$-dense,
then both $G$ and $K$ must be homothetic to an ellipse $E.$ 
It is reasonable to conjecture that this conclusion still holds when $N\ge 3,$
that is 
\begin{quotation}
\it $G$ is $K$-dense if and ony if $K$ and $G$ are homothetic ellipsoids. 
\end{quotation}
Nevertheless, as we shall explain below, it is seems difficult to extend the analysis employed in \cite{ABG} to the case $N\ge 3:$
other means must be developed. The purpose of this paper is to investigate in that direction. 
\par
A geometrical analysis of the the computations made in \cite{ABG}
gives some useful information: (i) the first relevant coefficient 
in the Taylor expansion for $\de_K$ is related to the volume
of certain subsets of $K$ and can be used to give information on its symmetry; 
(ii) the second one is somewhat related to a weighted curvature of $\pa G$ at $x$; 
(iii) in the third one, the derivatives (up to the order $2$) of the curvature appear.
It is reasonable to expect that the higher-order coefficients contain information
about higher-order derivatives of the curvature of $\pa G$.
We shall see that, in general dimension, it is relatively easy to compute 
the first and second coefficient and it will be clear that is very difficult to compute the 
higher-order ones. In any case, higher-order terms can only give local
information about the surface $\pa G;$ thus, to have hope to prove the
conjectured result, we must use some global information.
\par 
The main result of this paper is an improvement of Amar, Berrone and Gianni's result.
\begin{thm}
\label{th:ellipse}
Let $K\subset\RE^2$ be a convex body and let $G$ be a bounded measurable set in $\RR^2.$ 
\par
If $G$ is $K$-dense, then $K$ and $G$ are ellipses that differ from one another
by a homothety.
\end{thm}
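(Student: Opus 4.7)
The plan is to extract from the $K$-density condition a pointwise identity on $\partial G$ by computing only the first non-trivial coefficient in the Taylor expansion of $\delta_K(x,r)$ at $r=0$, and then to close the argument with Minkowski's mixed-area inequality together with the $2$-dimensional affine isoperimetric inequality. This avoids both the third-order Taylor expansion and the $C^4$ regularity assumption used in \cite{ABG}.

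First, I would use the $K$-density condition to bootstrap regularity. Passing to the limit $r\to 0^+$ in $\delta_K(x,r)$ at a point $x$ of $\partial G$ where $G$ has a density, the limit equals the fraction of $K$ lying in the half-plane bounded by the line through the origin orthogonal to the approximate normal $\nu(x)$; constancy of this limit along $\partial G$ forces every line through the origin to bisect $K$, which in $\RE^2$ is equivalent to the central symmetry $K=-K$. A standard argument then upgrades $G$ to a bounded, strictly convex body with smooth boundary, removing the \emph{a priori} regularity hypothesis of \cite{ABG}. At a smooth boundary point $x$ with outer normal $\nu(x)$ and curvature $\kappa_G(x)$, parametrizing $\partial G$ as a graph over the tangent line at $x$ yields
\begin{equation*}
\delta_K(x,r)=\tfrac12-r\,\kappa_G(x)\,\Phi_K(\nu(x))+o(r),
\end{equation*}
where $\Phi_K(\nu)>0$ is (essentially) a normalized second moment of the chord of $K$ orthogonal to $\nu$. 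The $K$-density condition becomes the pointwise identity $\kappa_G(x)\,\Phi_K(\nu(x))\equiv c$ on $\partial G$; in Gauss-map coordinates this reads $R_G(\theta)=\Phi_K(\theta)/c$, where $R_G$ is the radius of curvature of $\partial G$ as a function of the angle $\theta$ of the outer normal.

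To finish, I would combine this identity with Minkowski's inequality $V(G,K)^2\ge V(G)V(K)$ and with the $2$-dimensional affine isoperimetric inequality. Using the classical support-function formulae for $V(G)$, $V(K)$, and the mixed area $V(G,K)$, the identity $R_G=\Phi_K/c$ rewrites both sides of the Minkowski estimate as explicit integrals of $\Phi_K$ against $h_G$ and $h_K$; the affine inequality, applied to $G$, should then force equality in Minkowski. Equality in Minkowski gives $G$ homothetic to $K$, while equality in the affine inequality identifies $G$ (hence $K$) as an ellipse, concluding the proof.

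The main obstacle is this last step. The local identity from the first-order Taylor coefficient alone determines $G$ only up to the shape of $\Phi_K$, and it is not transparent why $K$ must be an ellipse as opposed to any centrally symmetric convex body with the right moments. The cooperation between the two distinct inequalities -- one affine-invariant and one not -- together with the specific algebraic form of the normalized chord moment $\Phi_K$, is what should rule out all non-elliptic $K$; arranging this matching carefully is the delicate part of the argument.
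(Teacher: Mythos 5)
Your overall strategy is the paper's: regularity and central symmetry of $K$ from the zeroth-order coefficient, the pointwise identity $\kappa_G(x)\,\Phi_K(\nu(x))\equiv c$ from the first-order coefficient (this is exactly \eqref{cond2}, i.e.\ \eqref{cond22d} with $\Phi_K(u)$ a multiple of $\rho_K(u^\perp)^3$), and then Minkowski's inequality plus an affine inequality. Everything up to that pointwise identity is sound and matches the paper.

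The gap is precisely the step you yourself flag as ``delicate,'' and it is not a matter of careful bookkeeping: two specific ideas are missing. First, the homothety between $G$ and $K$ is \emph{not} obtained in the paper from equality in Minkowski's inequality for the pair $(G,K)$; it is obtained beforehand by entirely different means. Central symmetry of $K$ makes $u\mapsto\rho_K(u^\perp)^3$ even, so \eqref{cond22d} forces $f_G$ to be even, hence $G$ is centrally symmetric; combined with the Minkowski-sum identity $K=G-G$ up to homothety (Theorem \ref{th:minksum}), this yields $G=K$ up to homothety (Corollary \ref{cor:omegasim}) before any inequality is invoked. Second, once $G=K$, the identity becomes $\rho_K(u^\perp)^3=c\,V(K)\,f_K(u)$, and the body against which Minkowski's inequality is applied is not $G$ but the \emph{rotated polar body} $(K^*)^\perp$: using $h_{K^*}=1/\rho_K$ one computes $V(K,(K^*)^\perp)=c^{-1}$ exactly, while $2V(K)=\int\rho_K(u^\perp)^2\,du$ gives $c^{-2}=\Omega(K)^3/(8V(K))$. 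The chain
\[
\frac{\Omega(K)^3}{8V(K)}=c^{-2}=V(K,(K^*)^\perp)^2\ge V(K)\,V((K^*)^\perp)=V(K)\,V(K^*)\ge \frac{\Omega(K)^3}{8V(K)}
\]
then forces equality in the Hug--Lutwak inequality \eqref{lutwak}, whose equality case ($f_K=\lambda h_K^{-3}$, an ellipse by Petty's theorem) concludes. Applying Minkowski to $(G,K)$ and ``the affine inequality to $G$,'' as you propose, does not produce two bounds on the same quantity, so the two inequalities never interlock; the polar body is the missing ingredient.
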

\par
The improvements we introduce are mainly two: we remove the regularity assumptions on $K$ and $G;$
our proof only relies on items (i) and (ii), Minkowski's inequality for mixed volumes and a variant of the affine isoperimetric inequality: we 
thus avoid the use of the higher-order local information mentioned in (iii).
In the remainder of this section, we explain in detail the main steps of our argument.
\par
We begin by showing, in general dimension, that a $K$-dense domain $G$ is necessarily strictly convex and,
{\em no matter how regular $K$ is,} at least $C^{1,1}$-smooth, that is 
its boundary is locally the graph of a differentiable function with 
Lipschitz continuous derivatives (see Theorems \ref{convexity} and \ref{th:regomega}). These two properties are proved by showing
that $G$ is a level set for a regular value of a $C^{1,1}$-smooth convex function. 
\par
Then, we continue our investigation and observe that the requirement that $G$ is $K$-dense does not only imply the regularity of $G$
but also that of $K$ itself, and more: in fact, under the additional assumption that $K$ is 
centrally symmetric, we show that $K$ must be $C^{1,1}$-smooth, strictly convex and
that $K=G-G$ (i.e. $K$ is {\it Minkowski sum} of $G$ and $-G$) up to homotheties (Theorem \ref{th:minksum}). 
\par
A first by-product of this result is that the gain on the regularity of $K$ implies a gain in that of $G,$
that must be $C^{2,1}$-smooth. A second consequence pertains the case $N=2:$ since we are able to prove in this case that 
the $K$-density of $G$ implies the central symmetry of $K$ and $G,$ 
we obtain that $K$ and $G$ only differ by a homothety and are both strictly convex and $C^\infty$-smooth.
\par
However, the very importance of Theorem \ref{th:minksum} is that
it points towards the direction of the desired conjecture,
in the sense that, with the additional assumption that also $G$ be centrally symmetric,
we obtain that $G$ is a dilate of $K$ --- as predicted by the conjecture --- and moreover, 
by a bootstrap argument, we find that
both $K$ and $G$ must be $C^\infty$-smooth.  
\par
The next step of our argument is the computation of the first and second coefficient in the Taylor expansion
for $\de_K(x,r)$ in general dimension. Differently from what was done in \cite{ABG}, 
we privilege a geometrical point of view; in fact, we obtain the following formula:
\begin{equation}
\label{taylor} 
\de_K(x,r)=\de_0(x)-\de_1(x)\,r+o(r) \ \mbox{ as } \ r\to 0^+,
\end{equation}
where
\begin{equation}
\label{coeff0}
\de_0(x)=\frac{V(K\cap H^+_{\nu(x)})}{V(K)}, \ \ x\in\pa G,
\end{equation}
and 
\begin{equation}
\label{coeff1}
\de_1(x)=\frac1{2\,V(K)}\,\sums_{i=1}^{N-1} m_i(x)\,\ka_i(x), \ \ x\in\pa G.
\end{equation}
Here, 
\begin{equation}
\label{defmi}
m_i(x)=\int_{K\cap\pi_{\nu(x)}} \langle\xi, e_i(x)\rangle^2\,d\cH^{N-1}_\xi, \ i=1,\dots, N-1;
\end{equation}
$\nu(x)$ denotes the inward unit normal to $\pa G$ at $x;$ for any $u\in\SSN,$ 
$H^+_u$ and $\pi_u$
are respectively the half-space $\{y\in\RN: \langle y\cdot u\rangle\ge 0\}$  and the hyperplane $\pa H^+_u;$ 
$\ka_1(x),\dots, \ka_{N-1}(x)$ and $e_1(x),\dots, e_{N-1}(x)$ are respectively the principal curvatures and
directions of $\pa G$ at $x;$ 
$\cH^{N-1}_\xi$ is the $(N-1)$-dimensional Hausdorff measure. 
\par
When $G$ is $K$-dense, easy consequences of \eqref{taylor}, \eqref{coeff0} and \eqref{coeff1} are:
\begin{equation}
\label{cond1}
V(K\cap H^+_{\nu(x)})=\frac12\,V(K) \ \ x\in\pa G,
\end{equation}
and
\begin{equation}
\label{cond2}
\sums_{i=1}^{N-1} m_i(x)\,\ka_i(x)= c\,V(K), \ \ x\in\pa G,
\end{equation}
where $c$ is a constant. Condition \eqref{cond1} 
gives some sort of symmetry for $K$ (that, for $N=2,$ implies its central symmetry, 
as already observed in \cite{ABG}). Condition \eqref{cond2} 
is a constraint between the curvatures of $\pa G$ and certain {\it moments
of inertia} of the central sections of $K.$ When $N=2,$ it means that
the curvature of $\pa G$ and the radial function of $K$ must be
somewhat related. This last information is crucial since it
implies that $K$ and $G$ must be homothetic and, with the help
of Minkowski's inequality and
an affine inequality, that both
must be ellipses.

\setcounter{equation}{0}

\section{Convexity and regularity of $K$-dense domains.}

 Let $\cK_0^N$ be the set of convex bodies of $\RN$ that contain the origin in their interior; 
for $K\in\cK_0^N$ let $\|\cdot\|_K :\RN\rightarrow \mathbb R^+$ denote the {\em gauge} of the set $K$, that is
\[
\|x\|_K=\min\{r>0 : x\in rK\}.
\]
It is well-known that $x+r\,K=\{ y\in\RN: \|y-x\|_K\le r\};$ since, when $K$ is symmetric with respect to the origin, $\|\cdot\|_K$ is a norm, 
$B_K(x,r)$ is a convenient notation for the set $x+rK$. When $K=B,$ $\|\cdot\|_B$ is the euclidean norm
and we shall drop the subscript $B.$
\par
Given a measure $\mu$ on $\mathbb R^+$ and set $\phi(t)=\mu([0,t))$, we define a function $f^\phi:\RN\rightarrow\mathbb R$ as follows:
\begin{equation}
\label{definf}
        f^\phi(x)=\int_G \phi(\|y-x\|_K)\,dy=\int_G \phi(\|x-y\|_{-K})\,dy;
\end{equation}
$f^\phi$ is thus the convolution of the characteristic function $\one_G$ and
the composition of $\phi$ with the gauge of $-K.$

If $\mu$ is a Borel and locally finite measure, we can use the {\em layer-cake} representation theorem 
(see \cite{LL} for instance) in order to write:
\begin{equation}
\label{layer}
f^\phi (x)=\int_0^{+\infty} V( G\cap\{y :\|y-x\|_K>t\})\, d\mu=\int_0^{+\infty} V( G\setminus B_K(x,t))\, d\mu.
\end{equation}
\par
If $ G$ is $K$-dense, the last integral does not depend on $x$, for $x\in\partial G$.
Conversely, if $f^\phi(x)$ is constant on $\partial G$ for every chioice of the measure $\mu$, 
for each given $r>0$ we can set $\mu=\delta_r$ (the Dirac's delta measure centered at $r$) in \eqref{layer}
and obtain that $f^\phi(x)=V( G\setminus B_K(x,r)).$
When $G$ has finite measure, the assumption on $f^\phi$ and the 
fact that $r$ is arbitrary imply that $G$ must be $K$-dense. 
Thus, we can state the following characterization.

\begin{thm}
\label{characterisation}
Let $G$ be a bounded\footnote{It is possible to replace this assumption by asking that $V( G)<\infty;$ 
however, it turns out that there not exists any unbounded $K$-dense set of finite measure.}, measurable subset of $\RN$ with $V(G)>0$.
Then the following conditions are equivalent:
\begin{itemize}
\item[(i)] $ G$ is $K$-dense,
\item[(ii)] for every Borel, locally finite measure $\mu$ on $\mathbb R^+$, the function $f^\phi$ defined in \eqref{definf} does not depend on $x$, for $x\in\pa G$.
\end{itemize}
\end{thm}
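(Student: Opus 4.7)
The plan is to establish the two implications separately, using the layer-cake representation \eqref{layer} as the main tool. Both directions are essentially bookkeeping once one notices that the measures $\mu$ that matter are, on the one hand, arbitrary (for the forward direction one integrates against them) and, on the other hand, pointwise concentrated (for the reverse direction one plugs in a Dirac mass).

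For the implication (i) $\Rightarrow$ (ii), I would start from the identity
\[
V(G\setminus B_K(x,t))=V(G)-V(G\cap (x+tK)),
\]
which is well defined since $V(G)<\infty$ by the boundedness of $G$. The $K$-density of $G$ says precisely that $V(G\cap(x+tK))$ is constant in $x\in\pa G$ for each fixed $t>0$, hence so is $V(G\setminus B_K(x,t))$. Fix an arbitrary Borel, locally finite measure $\mu$ on $\mathbb R^+$ and insert this into the layer-cake formula \eqref{layer}: the integrand is constant on $\pa G$ for every $t$, and it is dominated by the $\mu$-integrable constant $V(G)$, so the integral itself is constant on $\pa G$. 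A minor point to check is that $t\mapsto V(G\setminus B_K(x,t))$ is measurable (it is monotone, which suffices) so that the layer-cake identity can legitimately be invoked.

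For the implication (ii) $\Rightarrow$ (i), the natural choice is to test the hypothesis against Dirac masses. Given $r>0$, let $\mu=\de_r$, which is plainly Borel and locally finite, so that $\phi(t)=\mu([0,t))=\one_{(r,\infty)}(t)$. Substituting into \eqref{definf} one obtains
\[
f^\phi(x)=\int_G \one_{\{\|y-x\|_K>r\}}\,dy=V(G\setminus B_K(x,r)),
\]
which by hypothesis is constant on $\pa G$. Since $V(G)<\infty$, the complementary quantity $V(G\cap(x+rK))$ is then also constant on $\pa G$, and dividing by $V(rK)$ yields the $K$-density of $G$ at the arbitrary level $r$.

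No real obstacle is anticipated here; the statement is essentially a reformulation of the definition through the layer-cake identity, and the forward direction requires only a routine exchange of order of integration. The subtle issue, if any, lies in the hypothesis $V(G)<\infty$ (used in both directions to pass between $V(G\setminus B_K(x,r))$ and $V(G\cap B_K(x,r))$), which is why the boundedness assumption, or its stated weakening, cannot be dropped.
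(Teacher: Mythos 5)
Your proof is correct and follows essentially the same route as the paper: the forward implication via the layer-cake identity \eqref{layer} together with $V(G\setminus B_K(x,t))=V(G)-V(G\cap(x+tK))$, and the reverse implication by testing against the Dirac masses $\mu=\delta_r$. The only quibble is that the constant $V(G)$ need not be $\mu$-integrable when $\mu$ is merely locally finite, but no domination is actually needed there: constancy of the integrand in $x$ for each fixed $t$ already yields constancy of the integral (and, since $G$ is bounded, the integrand vanishes for all large $t$, so the integral is in fact finite).
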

The following lemma is instrumental to prove the convexity of $G;$ its proof is straightforward. 
\begin{lem}
Let the function $\phi(t)=\mu([0,t))$ be convex, increasing and non-constant,  
and let $f^\phi$ be the function defined in \eqref{definf}.
Then:
\begin{itemize}
\item[(i)] $f^\phi$ is convex and hence, in particular, continuous;
\item[(ii)] $f^\phi$ is coercive, that is $f^\phi\to+\infty$ as $|x|\to\infty.$ 
\end{itemize}
\end{lem}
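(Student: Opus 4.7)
The plan is to exploit the fact that $f^\phi$ is an average over $G$ of the functions $x\mapsto \phi(\|x-y\|_{-K})$, and so inherits global structure from those individual functions.

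For (i), I would argue that each map $x\mapsto \|x-y\|_{-K}$ is convex on $\RN$: since $-K\in\cK_0^N$, the gauge $\|\cdot\|_{-K}$ is positively homogeneous of degree one and its sublevel sets are the sets $t(-K)$, which are convex, so it is a sublinear (hence convex) function, and convexity is preserved by the translation $y\mapsto y$. Since $\phi$ is convex and \emph{non-decreasing} on $[0,+\infty)$, the composition $x\mapsto \phi(\|x-y\|_{-K})$ is convex for each $y\in G$. Integrating in $y$ over $G$ (a nonnegative-weighted average of convex functions) yields the convexity of $f^\phi$. Continuity then follows from the standard fact that a finite-valued convex function on all of $\RN$ is continuous.

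For (ii), the key observation is that a convex, increasing, non-constant function $\phi$ on $[0,+\infty)$ must satisfy $\phi(t)\to +\infty$ as $t\to+\infty$: the monotonicity of the difference quotients of a convex function gives, for any $0\le t_1<t_2$ with $\phi(t_1)<\phi(t_2)$, a lower bound $\phi(t)\ge \phi(t_2)+\frac{\phi(t_2)-\phi(t_1)}{t_2-t_1}(t-t_2)$ for $t\ge t_2$, with positive slope. Next, since $-K$ contains the origin in its interior, there exists $c_1>0$ such that $\|z\|_{-K}\ge c_1|z|$ for all $z\in\RN$; since $G$ is bounded, say $G\subset B(0,R)$, we get for every $y\in G$
\[
\|x-y\|_{-K}\ \ge\ c_1\,|x-y|\ \ge\ c_1\bigl(|x|-R\bigr),\qquad |x|>R.
\]
Using that $\phi$ is non-decreasing, this yields
\[
f^\phi(x)\ \ge\ V(G)\,\phi\!\bigl(c_1(|x|-R)\bigr)\ \longrightarrow\ +\infty\qquad \text{as }|x|\to\infty,
\]
which is the desired coercivity.

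There is no serious obstacle here; the only points that require a little care are (a) checking that convexity of $\phi$ alone, combined with monotonicity and non-constancy, forces $\phi(t)\to +\infty$ (so that boundedness of $\mu$ is not an issue), and (b) using the two-sided comparison between the gauge $\|\cdot\|_{-K}$ and the Euclidean norm, which holds because $-K$ is a convex body with $0$ in its interior and therefore is sandwiched between two Euclidean balls.
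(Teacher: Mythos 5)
Your proof is correct, and since the paper itself omits the argument (declaring it ``straightforward''), yours supplies exactly the standard details one would expect: sublinearity of the gauge plus monotone convex composition and integration in $y$ for convexity, and the linear lower bound forced on a convex, increasing, non-constant $\phi$ together with the two-sided comparison of $\|\cdot\|_{-K}$ with the Euclidean norm for coercivity. The only implicit inputs are the standing assumptions that $G$ is bounded with $V(G)>0$, which you use and which hold in the paper's context.
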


%

\begin{thm}
\label{convexity}
Let $G$ be a bounded $K$-dense set; then $ G$ is strictly convex.
\par
Moreover, if the function $\phi(t)=\mu([0,t))$ is convex and strictly increasing, 
then $G$ is a regular level set for $f^\phi$.
\end{thm}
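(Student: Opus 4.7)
The plan is to realize $G$, up to its boundary, as the sublevel set $A := \{f^\phi \le c\}$ of the convex function $f^\phi$ for a well-chosen $\phi$, and to transfer convexity, strict convexity, and regularity from $A$ to $G$. Choosing $\phi$ convex, non-constant, and increasing (for instance $\phi(t)=t$), Theorem~\ref{characterisation} and the preceding lemma yield that $f^\phi$ is continuous, convex, coercive, and constantly equal to some $c$ on $\partial G$. Thus $A$ is compact convex with $\partial A\subseteq\{f^\phi=c\}$; once the inclusion $G\subseteq A$ below is combined with $V(G)>0$, the set $A$ must have nonempty interior and $\partial A=\{f^\phi=c\}\supseteq\partial G$.

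To prove $G\subseteq A$, I would argue by contradiction: if some $x_0\in G$ satisfied $f^\phi(x_0)>c$, then since $f^\phi\equiv c$ on $\partial G$ in fact $x_0\in G^\circ$; along any line $L$ through $x_0$, boundedness of $G$ together with an intermediate-value argument produce two points $p^\pm\in L\cap\partial G$ with $x_0$ strictly between them. Both satisfy $f^\phi(p^\pm)=c$, yet convexity of $f^\phi|_L$ forces $f^\phi(x_0)\le c$, a contradiction. For the reverse inclusion $A^\circ\subseteq G$, I would decompose $\RN\setminus\partial G=G^\circ\sqcup(\overline G)^c$ as a disjoint union of two open sets. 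The convex --- hence connected --- open set $A^\circ$ avoids $\partial G$ (since $\partial G\subseteq\partial A$), so it must lie entirely in one of the two pieces. If it lay in $(\overline G)^c$, then $G$ would be confined to $A\setminus A^\circ=\partial A$, a Lebesgue-null set, contradicting $V(G)>0$. Hence $A^\circ\subseteq G^\circ$, which together with $G\subseteq A$ gives $\partial G=\partial A$ and shows that $G$ is convex.

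To promote convexity to strict convexity, I would rerun the same construction with $\phi(t)=t^2$ and check that $f^\phi$ is strictly convex: the only obstruction to strict inequality in the midpoint estimate $\phi(\|(x_1+x_2)/2-y\|_K)\le\tfrac12\phi(\|x_1-y\|_K)+\tfrac12\phi(\|x_2-y\|_K)$ is the equality $\|x_1-y\|_K=\|x_2-y\|_K$, which cuts out a Lebesgue-null set in $y$ whenever $x_1\ne x_2$. Strict convexity of $f^\phi$ forces $A$, and thus $G$, to be strictly convex. For the last assertion, with $\phi$ now convex and strictly increasing as in the hypothesis, the inclusion $A^\circ\subseteq G$ shows $A^\circ\ne\emptyset$, so $\min f^\phi<c$; thus $c$ is a regular value of $f^\phi$ and $\partial G=\{f^\phi=c\}$ is a regular level set.

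The most delicate step is the inclusion $A^\circ\subseteq G$, where the hypothesis on $G$ is reduced to mere measurability: one must pass from the topological datum $\partial G\subseteq\partial A$ to a set-theoretic identification of $G$ with the convex body $A$. The decomposition $\RN\setminus\partial G=G^\circ\sqcup(\overline G)^c$ combined with the connectedness of $A^\circ$ is precisely what circumvents any further regularity assumption on $G$.
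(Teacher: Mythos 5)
Your reduction of convexity to the sublevel set $A=\{f^\phi\le c\}$ is sound and close to the paper's own route (the inclusions $G\subseteq A$ and $A^\circ\subseteq G$ via connectedness are a nice way to make precise what the paper dismisses as ``easy to check''). The gap is in the two places where you need to exclude that $f^\phi$ is constant on a segment. First, your strict convexity step rests on the claim that for $x_1\ne x_2$ the $K$-bisector $\{y:\|x_1-y\|_K=\|x_2-y\|_K\}$ is Lebesgue-null. This is false for a general gauge: take $K$ the unit square in $\RR^2$, $x_1=(-1,0)$, $x_2=(1,0)$; then $\|x_i-y\|_K=|y_2|$ for every $y$ in the full-dimensional cone $\{|y_2|\ge |y_1|+1\}$, so the bisector contains an open set. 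Worse, if $G$ sits inside such a region, $f^\phi$ with $\phi(t)=t^2$ is genuinely \emph{not} strictly convex (it is locally constant near that segment), so no choice of argument can establish strict convexity of $f^\phi$ without using where $G$ sits relative to the segment. Second, the inference ``$A^\circ\ne\emptyset$, so $\min f^\phi<c$'' is unjustified: a convex function can equal its minimum on a set with nonempty interior, and nothing you have proved rules out $f^\phi\equiv c$ on $A$.

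Both holes are closed by the single lemma that drives the paper's proof and that your argument never establishes: $f^\phi$ cannot be constant on a segment $[x_1,x_2]$ whose midpoint $m$ lies in $\overline G$. Indeed, constancy forces
\[
\int_G\Bigl\{\tfrac12\phi(\|y-x_1\|_K)+\tfrac12\phi(\|y-x_2\|_K)-\phi(\|y-m\|_K)\Bigr\}\,dy=0,
\]
and since the integrand is non-negative and continuous, and every neighborhood of a point of $\overline G$ meets $G$ in positive measure (here $K$-density is used), the integrand vanishes on $\overline G$; evaluating at $y=m$ gives $\phi\bigl(\|\tfrac12(x_1-x_2)\|_K\bigr)=\phi(0)$, which is absurd for $\phi(t)=t$ or any strictly increasing $\phi$. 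Note that this evaluates the equality at the one point where the gauge degenerates, rather than relying on the measure of the equality set --- which is exactly what rescues the argument for non-Euclidean $K$. With this lemma, a segment in $\partial A=\partial G$ is impossible (its midpoint lies in $\overline G$), giving strict convexity of $A$ and hence of $G$; and $f^\phi$ cannot be constant on $A$ when $A^\circ\subseteq\overline G$, so $\min f^\phi<c$ and the level set is regular. I would recommend restructuring your proof around this lemma; the rest of your argument can then stand as written.
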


\begin{proof}
First, we show that, if $\phi$ satisfies the assumptions, then $f^\phi$ cannot be constant 
on a segment whose middle point belongs to $\overline{G}$.
\par 
By contradiction, let $x$ and $y$ be the endpoints of a segment on which $f^\phi$ is constant and suppose the midpoint $\frac{1}{2}(x+y)\in\overline G$; then
\[
\int_{G}\left\{ \phi(\|z-x\|_K)/2+ \phi(\|z-y\|_K)/2-
\phi\left(\left\|z-(x+y)/2\right\|_K\right)\right\}\, dz=0.
\]
Since the integrand is always non-negative, we get that
\[
2\,\phi\left(\left\|z-(x+y)/2\right\|_K\right)=\phi(\|z-x\|_K)+\phi(\|z-y\|_K)
\]
for every $z\in\overline G,$ since both $\phi$ and $\|\cdot\|_K$ are continuous\footnote{This is clear when $G$ is connected. Otherwise, it is sufficient that, for each $x\in\overline G$, every neighborhood of $x$ has intersection with $G$ of positive measure. This is guaranteed by the fact that $G$ is $K$-dense.}.
Thus, if we choose $z=\frac{1}{2}(x+y)$ we get a contradiction.
\par 
Therefore, we can claim that the function $f^\phi$ does not reach its minimum on the boundary of $ G$, otherwise $f^\phi$ would be constant on the convex hull of $\partial G$ which contains a segment whose middle point belongs to $\overline G$.
Indeed consider a line, say $r$, containing at least three points of $G$, say $x$, $y$ and $z$, with $y\in ]xz[$\footnote{We denote by $]xz[$ the relatively open segment from $x$ to $z$.}; then, being $G$ bounded, $\pa G$ intersects every connected component $r\setminus ]xz[$ and thus every point of $]xz[$ belongs to the convex hull of $\pa G$; simply choose a segment contained in $]xz[$ whose middle point is $y$.\\
Hence, there exists a positive number $s$ such that 
the set $A$ where $f^\phi<s$ is open, bounded and convex;
also, $\pa G\subseteq\pa A=\{ x\in\RN: f^\phi(x)=s\}.$ 
It is now easy to check that this property implies that $A\subseteq  G\subseteq\overline A$ 
and, in particular, that $G$ is convex and hence stictly convex.
\end{proof}

\begin{cor}\label{maxdist}
Let $G$ be a $K$-dense body; then the function
\[
x\mapsto\max_{y\in G}\|y-x\|_K
\]
is constant on $\pa G$.
\end{cor}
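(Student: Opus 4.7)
The plan is to recognize $R(x):=\max_{y\in G}\|y-x\|_K$ as the smallest radius $r$ for which $G$ is contained in $\overline{B_K(x,r)}$, and then to exploit $K$-density to show that this radius has the same value for every $x\in\partial G$. Concretely, I would show that
\[
R(x)=\inf\{r>0:V( G\cap B_K(x,r))=V( G)\},
\]
an identity that depends only on the constancy, in $x\in\partial G$, of the function $r\mapsto V(G\cap B_K(x,r))$ -- which is precisely the $K$-density hypothesis. Once this identity is established, the conclusion is immediate: the $K$-density guarantees that the set in the infimum on the right is the same for every $x\in\partial G$, hence $R(x)$ is constant on $\partial G$.

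The easy inclusion is that if $r\ge R(x)$, then $G\subseteq\overline{B_K(x,r)}$ and therefore $V(G\cap B_K(x,r))=V(G)$ (boundaries of convex bodies have Lebesgue measure zero, so the distinction between $B_K$ and $\overline{B_K}$ is harmless). The content of the identity lies in the converse: if $r<R(x)$ then $V(G\setminus B_K(x,r))>0$. I would argue this by combining the continuity of $y\mapsto\|y-x\|_K$ with the convexity of $G$, which is guaranteed by Theorem \ref{convexity}. Since $r<R(x)$, there exists $y\in\overline G$ with $\|y-x\|_K>r$; continuity provides a neighborhood $U$ of $y$ on which $\|\cdot-x\|_K>r$. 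Because $\overline G$ is a convex body with non-empty interior, $U\cap\mathrm{int}(\overline G)$ is a non-empty open set and therefore has positive Lebesgue measure; as $G$ and $\mathrm{int}(\overline G)$ agree up to a negligible set, this gives $V(G\cap\{z:\|z-x\|_K>r\})>0$, which is what we wanted.

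The only delicate point, and hence the main technical obstacle, is the convexity-continuity argument in the previous paragraph needed to rule out $V(G\cap B_K(x,r))=V(G)$ for $r<R(x)$. Everything else is a formal consequence of Theorem \ref{characterisation} (or directly of the definition of $K$-density) and the observation that the right-hand side of the reformulation of $R(x)$ no longer involves $x$ at all, once $K$-density is assumed.
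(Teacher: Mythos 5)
Your proposal is correct and is essentially the paper's own argument in a slightly repackaged form: both rest on the two facts that $V(G\cap B_K(x,r))$ is independent of $x\in\partial G$ by $K$-density, and that $V(G\cap B_K(x,r))=V(G)$ precisely when $r\ge\max_{y\in G}\|y-x\|_K$, the nontrivial half being that a body has positive measure in any open set meeting its closure. The paper phrases this as a two-point contradiction while you phrase it as the identity $R(x)=\inf\{r>0:V(G\cap B_K(x,r))=V(G)\}$, but the substance is the same.
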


\begin{proof}
Let $x$ and $z\in\pa G$ and suppose by contradiction that 
$$
d_1=\max_{y\in G}\|y-x\|_K<\max_{y\in G}\|y-z\|_K=d_2.
$$
Then $ G\setminus B_K(z,d_1)\neq\varnothing$ and hence $V(G\setminus B_K(z,d_1))>0,$
being $G$ a body and $B_K(z,d_1)$ open; thus, 
\begin{eqnarray*}
&V(G\cap B_K(x,d_1))=V(G)=\\
&V(G\setminus B_K(z,d_1))+V(G\cap B_K(z,d_1))>V(G\cap B_K(z,d_1)).
\end{eqnarray*}
\end{proof}

We now study the regularity of $K$-dense sets.

\begin{thm}
\label{th:regomega}
Let $G$ be a $K$-dense body; then $\pa G$ is of class $C^{1,1},$
that is $\pa G$ is locally the graph of a $C^{1,1}$-smooth function.
\end{thm}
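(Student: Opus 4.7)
The idea is to present $\pa G$ as a regular level set of a convex function of class $C^{1,1}$ and then to invoke the implicit function theorem in the Lipschitz category. A natural candidate, suggested by Theorem \ref{convexity}, is the function
\[
f(x)=\int_G \|y-x\|_K\,dy,
\]
obtained as $f^\phi$ with $\phi(t)=t$ (so that $\mu$ is the Lebesgue measure on $\RE^+$). Since $\phi$ is convex and strictly increasing, Theorem \ref{convexity} guarantees that $f$ is convex and coercive and that $\pa G$ is contained in a level set $\{f=s\}$ with $s$ strictly larger than $\min f$.

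The central step is to verify that $f\in C^{1,1}(\RN)$. The gauge $g=\|\cdot\|_K$ is convex, Lipschitz and positively $1$-homogeneous; hence it is differentiable almost everywhere with $|\na g|\le \mathrm{Lip}(g)$ wherever defined. Dominated convergence allows differentiation under the integral sign, and the change of variable $z=y-x$ yields
\[
\na f(x)=-\int_G (\na g)(y-x)\,dy=-\int_{G-x}(\na g)(z)\,dz.
\]
For any $x,x'\in\RN$ the difference $\na f(x)-\na f(x')$ is then the integral of the bounded function $\na g$ over the symmetric difference $(G-x)\triangle(G-x')$. Since $G$ is bounded and convex by Theorem \ref{convexity}, it has finite perimeter $P(G)=\cH^{N-1}(\pa G)$, and the standard translation estimate
\[
V\bigl((G-x)\triangle(G-x')\bigr)\le P(G)\,|x-x'|
\]
gives
\[
|\na f(x)-\na f(x')|\le \mathrm{Lip}(g)\,P(G)\,|x-x'|,
\]
whence $f\in C^{1,1}(\RN)$.

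Finally, since $f$ is convex and of class $C^1$, its set of critical points coincides with its set of global minimizers, and Theorem \ref{convexity} rules out any minimizer on $\pa G$. Thus $\na f\ne 0$ on $\pa G$, and the implicit function theorem in the $C^{1,1}$ category yields that $\pa G$ is locally the graph of a $C^{1,1}$ function, as desired.

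The main obstacle I expect is the Lipschitz continuity of $\na f$ without any a priori smoothness of $K$: the gauge $g$ may well have jumps of its gradient across its singular set, and this could spoil a naive estimate of $\na f(x)-\na f(x')$. The crucial device is the change of variable above, which moves the $x$-dependence from the integrand (where the singularities of $g$ would appear) into the domain of integration, where it is controlled by the symmetric-difference estimate for translates of the convex body $G$.
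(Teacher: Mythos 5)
Your proposal is correct and follows essentially the same route as the paper: take $f=f^\phi$ with $\phi(t)=t$, differentiate under the integral using the a.e.\ differentiability and boundedness of the gradient of the gauge, shift the $x$-dependence onto the domain of integration, and bound the difference of gradients by the perimeter times $|x-x'|$ via the translation estimate for the characteristic function of the convex body $G$, concluding via the regular level-set property from Theorem \ref{convexity}. No substantive differences.
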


\begin{proof}
Set $f=f^\phi$ with $\phi(t)=t.$ 
By Theorem \ref{convexity}, it is sufficient to show that $f\in C^{1,1}$.
\par
Consider the incremental ratio of $f$ at $x$ in a canonical direction $e_i:$ 
\[
\frac{f(x+te_i)-f(x)}{t}=\int_ G\frac{\|x-z+te_i\|_{-K}-\|x-z\|_{-K}}{t}\,dz.
\]
Since $\|\cdot\|_{-K}$ is almost everywhere differentiable and its gradient is a bounded map over $\RN$, 
by the dominated convergence theorem, we obtain that the partial derivative $\pa_{x_i}f(x)$
exists and equals
\begin{equation*}
\label{partial}
\int_G \frac{\pa}{\pa x_i} \| x-z\|_{-K}\,dz=\int_{\RN} \one_G(x-z)\,\frac{\pa}{\pa z_i} \| z\|_{-K}\,dz,
\end{equation*}
and the second factor in the integrand is bounded almost everywhere by a constant, say, $L.$
Thus, for $x, y\in\RN,$ we obtain the estimate:
\begin{eqnarray*}
|\pa_{x_i}f(x)-\pa_{x_i}f(y)|&\le& L\,\int_{\RN}|\one_ G(x-z)-\one_ G(y-z)|\,dz\le \\
&&L\,P(\pa G)\,\|x-y\|,
\end{eqnarray*}
since $G$ is convex and bounded (here, $P(\pa G)$ denotes the perimeter of $G$).
\par
Therefore, $f$ is differentiable and has Lipschitz continuous partial derivatives.
\end{proof}

Since the function $\|\cdot\|_K$ has the same regularity as $\pa K$ 
at all points of $\RN$ except the origin, then if $\pa K\in C^{m,1}$ for some integer $m,$ 
by the same arguments used in the proof of Theorem \ref{th:regomega}, we can
easily prove the following result.

\begin{thm}
\label{th:induc}
Let $G$ be a bounded $K$-dense set, and let $\pa K\in C^{m,1}$ for some integer $m.$ 
Then $\pa G\in C^{m+1,1}.$
\end{thm}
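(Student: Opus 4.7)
The plan is to adapt the proof of Theorem \ref{th:regomega} by replacing the linear weight $\phi(t)=t$ with $\phi(t)=t^{m+1}$; for $m=0$ this recovers the earlier choice. Set $f=f^\phi$. The corresponding measure $\mu([0,t))=t^{m+1}$ is Borel and locally finite, and $\phi$ is convex and strictly increasing, so Theorem \ref{characterisation} and the argument of Theorem \ref{convexity} yield $s>0$ with $\pa G=\{f=s\}$ and $\nabla f\neq 0$ there. By the implicit function theorem, it then suffices to show $f\in C^{m+1,1}_{\mathrm{loc}}(\RN)$.

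The key input is that the integration kernel $h(z):=\|z\|_{-K}^{m+1}$ belongs to $C^{m,1}_{\mathrm{loc}}(\RN)$. Away from the origin this is immediate by the chain rule from the assumption $\pa K\in C^{m,1}$, which gives $\|\cdot\|_{-K}\in C^{m,1}(\RN\setminus\{0\})$. At the origin, $h$ is positively homogeneous of degree $m+1$, so a chain-rule computation shows that each partial derivative $\pa^\alpha h$ with $|\alpha|=k\le m+1$ is positively homogeneous of degree $m+1-k$ and has $C^{m-k,1}$ restriction to the unit sphere for $k\le m$ (and merely bounded restriction for $k=m+1$). Combined with homogeneity, this implies $\pa^\alpha h$ is locally bounded for $|\alpha|\le m+1$ and locally Lipschitz for $|\alpha|\le m$, i.e., $h\in C^{m,1}_{\mathrm{loc}}(\RN)$.

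With $h\in C^{m,1}_{\mathrm{loc}}$ in hand, I differentiate
\[
f(x)=\int_{\RN}\one_G(x-z)\,h(z)\,dz
\]
under the integral sign up to order $m+1$, obtaining
\[
\pa^\alpha f(x)=\int_{\RN}\one_G(x-z)\,\pa^\alpha h(z)\,dz,\qquad |\alpha|\le m+1,
\]
where each identity is justified by the dominated convergence theorem applied to the local $L^\infty$ bounds on $\pa^\alpha h$. For $|\alpha|=m+1$ the same trick as in Theorem \ref{th:regomega} yields
\[
|\pa^\alpha f(x)-\pa^\alpha f(y)|\le L\int_{\RN}|\one_G(x-z)-\one_G(y-z)|\,dz\le L\,P(\pa G)\,\|x-y\|,
\]
with $L$ a local bound on $|\pa^\alpha h|$ over a neighbourhood of $(x-G)\cup(y-G)$ and the second inequality relying on the convexity of $G$. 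Hence $f\in C^{m+1,1}_{\mathrm{loc}}$, and the proof is complete.

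The only delicate point in this plan is controlling the behaviour of $h$ at the origin, where $\|\cdot\|_{-K}$ loses regularity; the choice $\phi(t)=t^{m+1}$ is made precisely so that the homogeneity degree compensates for this singularity. Once this regularity check is done, everything else is a direct transcription of the proof of Theorem \ref{th:regomega}.
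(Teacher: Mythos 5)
Your proof is correct, and it is worth pointing out that it actually supplies a technical device that the paper leaves implicit. The paper ``proves'' Theorem \ref{th:induc} only by asserting that $\|\cdot\|_K$ inherits the regularity of $\pa K$ away from the origin and that ``the same arguments'' as in Theorem \ref{th:regomega} apply; taken literally, with the weight $\phi(t)=t$, this runs into a genuine difficulty: the $k$-th order derivatives of the kernel $\|z\|_{-K}$ are positively homogeneous of degree $1-k$, hence unbounded near $z=0$ for $k\ge 2$ (and not even locally integrable once $k>N$), and the singular point $z=0$ lies in the closure of the integration region $x-\overline{G}$ precisely when $x\in\pa G$. Your choice $\phi(t)=t^{m+1}$ is exactly what is needed to make the iteration honest: the kernel $\|z\|_{-K}^{m+1}$ is positively homogeneous of degree $m+1$, so its derivatives of order $k\le m$ are homogeneous of degree $\ge 1$ and Lipschitz on the sphere, hence locally Lipschitz across the origin, while those of order $m+1$ are homogeneous of degree $0$ and bounded a.e.; after that, the differentiation under the integral sign and the perimeter estimate transcribe verbatim from Theorem \ref{th:regomega}, and Theorem \ref{convexity} still applies since $t^{m+1}$ is convex and strictly increasing. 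The only point I would ask you to spell out slightly more is the existence of the derivatives of $h$ \emph{at} the origin (not just their homogeneity away from it): for $|\alpha|=k\le m$ one checks that $\pa^\alpha h(z)=O(\|z\|^{m+1-k})=o(\|z\|)$ when $k\le m-1$... more precisely, that each $\pa^\alpha h$ with $|\alpha|\le m-1$ is differentiable at $0$ with zero differential because it is $O(\|z\|^2)$, which matches the continuous extension by $0$ of the next-order derivatives. With that routine verification added, your argument is complete and, in my view, more self-contained than the paper's.
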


\begin{cor}\label{bootstrap}
Let $G$ be a bounded $K$-dense set. If the class of homothetical images of $K$ contains $G$, 
then $\pa G\in C^\infty$.
\end{cor}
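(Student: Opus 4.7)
The plan is a straightforward bootstrap, exploiting the fact that homothetic images share the same boundary regularity. Since $G$ is homothetic to $K$, we have $\pa G \in C^{k,1}$ if and only if $\pa K \in C^{k,1}$ for every non-negative integer $k$.

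First I would invoke Theorem \ref{th:regomega} as the base of the induction: since $G$ is a $K$-dense body, $\pa G \in C^{1,1}$, and therefore, by the hypothesis that $G$ is a homothetic image of $K$, also $\pa K \in C^{1,1}$.

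Next I would run an induction on $m \geq 1$, with inductive hypothesis that $\pa K \in C^{m,1}$ (equivalently, $\pa G \in C^{m,1}$). Applying Theorem \ref{th:induc} to the $K$-dense set $G$ with this $K$ yields $\pa G \in C^{m+1,1}$, and the homothety then transfers this regularity back to $K$, giving $\pa K \in C^{m+1,1}$, which closes the induction. Hence $\pa G \in C^{m,1}$ for every $m$, i.e.\ $\pa G \in C^\infty$.

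There is no real obstacle here: the argument is entirely formal once Theorem \ref{th:induc} is available, and the only point worth stating explicitly is the trivial observation that homothety preserves $C^{k,1}$-regularity of the boundary, so that the gain in smoothness for $\pa G$ furnished by Theorem \ref{th:induc} automatically becomes a gain in smoothness for $\pa K$ to feed back into the next step.
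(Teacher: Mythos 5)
Your proof is correct and is essentially identical to the paper's: the paper also runs an induction on $m$, using Theorem \ref{th:regomega} as the base step and Theorem \ref{th:induc} as the inductive step, with the homothety transferring each regularity gain from $\pa G$ back to $\pa K$. You have merely made explicit the (trivial) transfer step that the paper leaves implicit.
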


\begin{proof}
We show that $G\in\mathcal C^{m,1}$  for every $m\in\mathbb N$ by induction on $m$.
The base step is exhibited Theorem \ref{th:regomega}; the inductive step is the matter of Theorem \ref{th:induc}.
\end{proof}

The following result shows that, surprisingly, at least when
$K$ is centrally symmetric, the existence of a $K$-dense set implies
some regularity of $K$ itself.

\begin{thm}
\label{th:minksum}

Let $K$ be a convex body symmetric with respect to the origin of $\RN$, and let $G$ be a $K$-dense body. 
Then it holds that

\begin{itemize}
\item[(a)] $K= G-G$, up to homotheties;
\item[(b)] $K$ is strictly convex;
\item[(c)] $\pa K$ and $\pa G$ are respectively $C^{1,1}$-smooth and $C^{2,1}$-smooth.
\end{itemize}
\end{thm}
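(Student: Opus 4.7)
The plan is to prove (a) by a Gauss-map/support-function argument anchored on Corollary \ref{maxdist}, after which (b) and (c) follow from standard facts about Minkowski sums together with Theorem \ref{th:induc}. Denote by $R$ the common value of $\max_{y\in G}\|y-x\|_K$ for $x\in\pa G$ furnished by Corollary \ref{maxdist}. The inclusion $G-G\subseteq RK$ is the easy half: the function $x\mapsto\max_{y\in G}\|y-x\|_K$ is convex as a supremum of convex functions and equals $R$ on $\pa G$, so by convexity it is bounded by $R$ on $\overline G$ (which is the convex hull of $\pa G$); hence $G-x\subseteq RK$ for every $x\in G$, giving $G-G\subseteq RK$.

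For the reverse inclusion $RK\subseteq G-G$, I would fix an arbitrary $y\in\pa G$ and pick a farthest point $x=x(y)\in\pa G$ with $\|x-y\|_K=R$. Such an $x$ exists by compactness and lies in $\pa G$ because $G$ is strictly convex (Theorem \ref{convexity}) and the maximum of a convex function on a convex body is attained at an extreme point. Central symmetry of $K$ gives $\|y-x\|_K=R$, so $y\in\pa G\cap\pa(x+RK)$. Since $G\subseteq x+RK$ and $\pa G\in C^{1,1}$ (Theorem \ref{th:regomega}), the unique outer normal $\nu_G(y)$ must lie in the normal cone of $x+RK$ at $y$, equivalently in the normal cone $N_K(u(y))$ of $K$ at $u(y):=(y-x)/R\in\pa K$, which means $\langle u(y),\nu_G(y)\rangle=h_K(\nu_G(y))$. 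As $y$ ranges over $\pa G$ the Gauss map $\nu_G$ sweeps all of $\SSN$, so for every $v\in\SSN$ there is a choice of $y$ with $\nu_G(y)=v$ producing a point $u(y)\in\pa K$ with $\langle u(y),v\rangle=h_K(v)$. Letting $U:=\{u(y):y\in\pa G\}\subseteq K$, this gives $h_{\mathrm{conv}(U)}\ge h_K$ pointwise, while $U\subseteq K$ forces the reverse inequality; hence $\mathrm{conv}(U)=K$. Since $Ru(y)=y-x(y)\in G-G$ for every $y$ and $G-G$ is convex, $RK\subseteq G-G$, finishing (a).

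Part (b) is then immediate: $G$ is strictly convex by Theorem \ref{convexity}, and the Minkowski sum of two strictly convex bodies is strictly convex (matched supporting hyperplanes in $A+B$ force unique supporting points in each summand), so $RK=G+(-G)$, and therefore $K$, is strictly convex. For (c), the $C^{1,1}$ smoothness of the strictly convex body $G$ translates into $h_G\in C^{1,1}(\SSN)$ (its gradient is the inverse Gauss map of $G$, which is Lipschitz), so $h_K=R^{-1}(h_G+h_{-G})$ is also $C^{1,1}$ and $\pa K\in C^{1,1}$; Theorem \ref{th:induc} applied with $m=1$ then upgrades the regularity of $G$ to $\pa G\in C^{2,1}$.

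The delicate point in this plan is the identification $\mathrm{conv}(U)=K$, which must not presume any regularity of $K$ (both smoothness and strict convexity of $K$ are conclusions, not hypotheses, of the theorem). What rescues the argument is the already-established $C^{1,1}$ smoothness of $\pa G$: it supplies a single outer normal $\nu_G(y)$ at each farthest-point contact, which then selects a well-defined supporting point of $K$ in the direction $\nu_G(y)$ even when $\pa K$ has flat faces or corners, and surjectivity of the Gauss map of $G$ sweeps those supporting points over every direction.
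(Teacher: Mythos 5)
Your parts (a) and (b) are correct, and the route is essentially the paper's: the paper likewise extracts a contact point $y-z\in\pa K$ from a farthest-point pair and uses the single outward normal supplied by the $C^{1,1}$ regularity of $\pa G$ to control the normal cone of $K$ at that point. The only real difference is the closing step: the paper starts from an exposed point of $K$ and finishes with Straszewicz's theorem ($K$ is the closed convex hull of its exposed points), while you sweep the normal directions of $G$ and finish with the support-function identity $h_{\overline{\mathrm{conv}}(U)}=h_K$. Both are legitimate; yours trades Straszewicz for a support-function computation.

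Part (c), however, has a genuine gap. You assert that $\pa G\in C^{1,1}$ plus strict convexity gives $h_G\in C^{1,1}(\SSN)$ because ``its gradient is the inverse Gauss map of $G$, which is Lipschitz.'' This is false: $C^{1,1}$ regularity of the boundary is an \emph{upper} curvature bound and makes the \emph{direct} Gauss map Lipschitz, whereas Lipschitz continuity of the inverse Gauss map (equivalently $h_G\in C^{1,1}$, equivalently $G$ being a summand of a ball) is the dual condition of a \emph{lower} curvature bound, which is not available here. For instance, a $C^\infty$ strictly convex planar body whose boundary contains an arc of the graph of $x\mapsto x^4$ has inverse Gauss map behaving like $\theta\mapsto\theta^{1/3}$ near the flat point, so $h_G\notin C^{1,1}$. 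The two notions are genuinely independent: the intersection of two disks of radius $R$ has $h\in C^{1,1}$ but corners on its boundary, so even knowing $h_K\in C^{1,1}$ would not give $\pa K\in C^{1,1}$. What is actually needed is the theorem of Krantz and Parks \cite{KP} that the Minkowski sum of two convex bodies with $C^{1,1}$ boundaries again has $C^{1,1}$ boundary, applied to $K=G+(-G)$; this is a nontrivial result (its higher-order analogue fails, cf.\ Boman's example \cite{B1}), and some such input must replace your support-function argument. Your final step --- Theorem \ref{th:induc} with $m=1$ to upgrade $\pa G$ to $C^{2,1}$ --- is fine once $\pa K\in C^{1,1}$ is correctly established.
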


\begin{proof}
Recall that, since $K$ is convex, to each point $x\in\pa K$ we can associate its (non-empty) 
{\em normal cone} $N_K(x),$ which is the set of vectors $w$ such that 
$\langle y-x, w\rangle\ge 0$ for every $y\in K.$ Thus, in order to prove the 
differentiability of $\pa K,$ we only need to prove that $N_K(x)\cap\SSN$ 
contains only one vector for every $x\in\pa K.$ 
\par 
(a) Without loss of generality, let us suppose that 
$$
\max_{y\in G}\|y-x\|_K=1  \ \mbox{ for every } \ x\in\pa G.
$$
We have that
$$
\max_{y\in G-x}\|y\|_K=1,
$$ and hence $G-x\subseteq K$ for every $x\in\pa G.$
It follows that $ G- G\subseteq K.$ Indeed, if $z\in G-G,$ then $z=x-y$ for some points $x,y\in G;$
since $G$ is convex, there are points $x_1$ and $x_2$ in $\pa G$ and a number $0\leq\lambda\leq1$ such that
$x=\lambda x_1+(1-\lambda)x_2.$
Hence,
\[
z=\la\,(y-x_1)+(1-\la)(y-x_2).
\]
Since $K$ is convex and contains both $y-x_1$ and $y-x_2,$ we get that $z\in K.$
\par 
Viceversa, let $x$ be an exposed point of $\pa K$ and let $u\in\mathbb S^{N-1}$ 
be such that $H_u$ is the supporting hyperplane which intersects $K$ only at the point $x$.
\par
Next, choose $y\in\pa G$ such that the (inward) unit normal to $\pa G$ at $y$, $\nu_ G(y)$, coincides with $-u$
(it exists since we already know that $G$ is smooth and strictly convex).
Also, pick a point $z\in\pa G$ that maximizes the $K$-distance from $y$, 
that is, such that $\|y-z\|_K=1$. 
Note that $y-z\in (G-z)\cap\pa K$ and, since $ G-z\subseteq K$, 
we get the following reverse inclusion for the normal cones:
\[
N_K(y-z)\cap\SSN\subseteq\{-\nu_{ G-z}(y-z)\}=\{-\nu_{ G}(y)\}=\{ u\}.
\]
Hence, our choice of $x$ and $u$ allows us to write $x=y-z$.
Thus, $G-G$ contains all the exposed points of $\pa K$ and hence, being $K$ a closed convex set, it 
must contain also $K.$ 
\par
(b) It easily follows from (a) and Theorem \ref{convexity}.
\par
(c) From (a) and Theorem \ref{th:regomega}, it follows that $\pa K$ is $C^{1,1}$-smooth,
since the Minkowski sum of $C^{1,1}$ sets is $C^{1,1}$\footnote{See for instance \cite{KP}.}.
Theorem \ref{th:induc} then implies that $\pa G$ is $C^{2,1}$-smooth.
\end{proof}

\begin{cor}\label{cor:omegasim}
If, in addition to the assumptions of Theorem \ref{th:minksum}, $G$ is centrally symmetric, 
then $G=K$ (up to homotheties) and $\pa G$ (and $\pa K$) is $C^\infty$-smooth.
\end{cor}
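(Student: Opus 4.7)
The plan is to combine the Minkowski-sum identity from Theorem \ref{th:minksum}(a) with the elementary identity $G+G=2G$ for convex bodies, and then invoke the bootstrap of Corollary \ref{bootstrap} to upgrade the regularity all the way to $C^\infty$.

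First, since $G$ is centrally symmetric, after a harmless translation we may assume that $G$ is symmetric with respect to the origin, so that $-G=G$. This translation does not affect the hypothesis that $G$ is $K$-dense, nor the hypotheses of Theorem \ref{th:minksum}, since it only shifts the reference point of $G$ (the set $K$, which is already origin-symmetric, is untouched). With this normalization we have
\[
G-G = G+(-G) = G+G.
\]
Next I would use the standard fact that for any convex set $C$ and any non-negative scalars $\la,\mu$ one has $\la C + \mu C = (\la+\mu)C$; in particular $G+G=2G$. Thus $G-G=2G$.

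By part (a) of Theorem \ref{th:minksum}, $K$ coincides with $G-G$ up to a homothety. Combining with the identity just established, $K$ coincides with $2G$ up to a homothety, which is the same as saying that $K$ and $G$ differ only by a homothety, i.e.\ $G=K$ up to homotheties, as claimed.

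For the regularity statement, once we know that $G$ lies in the class of homothetical images of $K$, Corollary \ref{bootstrap} applies verbatim and yields $\pa G\in C^\infty$; since $K$ is itself a homothetic image of $G$, the same conclusion holds for $\pa K$. There is no real obstacle in this argument — the only non-tautological input is the identity $G+G=2G$ for convex $G$, which is immediate from the definition of Minkowski sum together with convexity — so the proof is essentially a short deduction from the already established Theorem \ref{th:minksum} and Corollary \ref{bootstrap}.
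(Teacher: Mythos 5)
Your proof is correct and is precisely the argument the paper intends (the corollary is stated without proof, being an immediate consequence of Theorem \ref{th:minksum}(a) via $G-G=G+G=2G$ for an origin-symmetric convex $G$, followed by Corollary \ref{bootstrap}). The details you supply — that the normalizing translation preserves $K$-density and that $\la C+\mu C=(\la+\mu)C$ for convex $C$ — are exactly the right ones.
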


\setcounter{equation}{0}

\section{Asymptotics as $r\to 0^+.$}

Consistently with what defined in Section 1, given a unit vector $u\in\SSN,$ 
we write $H_u^+=\{x\in\RN : \langle x, u\rangle \geq 0\}$ and $H_u^-=H_{-u}^+.$
Also, since our focus is on $K$-dense sets, without loss of generality, we can always suppose that
$G$ is convex.

\begin{thm}\label{1order}
Let $ G$ and $K$ be convex bodies and suppose that $\pa G$ is differentiable at $x.$ 
Then
\[
\lim_{r\to 0^+}\de_K(x,r)=\frac{V(K\cap H_{\nu(x)}^+)}{V(K)}.
\]
\par 
In particular, if $G$ is $K$-dense, then 
\begin{equation}
\label{halfC}
V(K\cap H_u^+)=\frac12\,V(K) \quad\mbox{for all $u\in\mathbb S^{N-1}.$}
\end{equation}

\end{thm}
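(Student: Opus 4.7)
The plan is to reduce the claim to a local geometric statement via a scaling change of variables. After substituting $y=x+r\xi$ inside the integral defining the numerator of $\de_K(x,r)$, one gets
$$
\de_K(x,r)=\frac{V\!\left(K\cap \tfrac{1}{r}(G-x)\right)}{V(K)},
$$
so everything boils down to showing that the rescaled sets $K\cap \tfrac{1}{r}(G-x)$ converge in Lebesgue measure to $K\cap H^+_{\nu(x)}$ as $r\to 0^+$ (understanding, of course, that $x\in\pa G$, otherwise the first statement is either trivial or empty).

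For the upper inclusion, convexity of $G$ combined with the uniqueness of $\nu(x)$ coming from differentiability makes $x+H^+_{\nu(x)}$ a supporting half-space to $G$ at $x$, hence $\tfrac{1}{r}(G-x)\subseteq H^+_{\nu(x)}$ for every $r>0$. For the matching lower inclusion, I would work in local coordinates in which $x=0$ and $\nu(x)=e_N$, writing $\pa G$ near $x$ as the graph of a convex $C^1$ function with vanishing gradient at the base point. Differentiability then yields, for each $\ep>0$, a $\de>0$ such that
$$
B(x,\de)\cap\{y\in\RN:\,\langle y-x,\nu(x)\rangle\ge \ep\,|y-x|\}\subseteq G.
$$
Choosing $r$ small enough that $x+rK\subseteq B(x,\de)$ (possible since $K$ is bounded) and rescaling by $1/r$ produces the sandwich
$$
K\cap\{\xi:\,\langle\xi,\nu(x)\rangle\ge \ep|\xi|\}\,\subseteq\, K\cap\tfrac{1}{r}(G-x)\,\subseteq\, K\cap H^+_{\nu(x)}.
$$
Passing first $r\to 0^+$ and then $\ep\to 0^+$, the leftmost volume increases to $V(K\cap H^+_{\nu(x)})$, because the residual region $K\cap\{0\le\langle\xi,\nu(x)\rangle<\ep|\xi|\}$ lies in an $\ep$-thickening of the hyperplane slice $K\cap\pi_{\nu(x)}$ and hence has vanishing $N$-dimensional measure.

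For the second assertion, $K$-density means $\de_K(x,r)=c(r)$ on $\pa G\times(0,\infty)$, so letting $r\to 0^+$ and applying the first part shows that $x\mapsto V(K\cap H^+_{\nu(x)})/V(K)$ is constant on $\pa G$; here Theorem \ref{th:regomega} guarantees that $\nu(x)$ is defined at every boundary point. Since Theorem \ref{convexity} ensures that $G$ is strictly convex, its Gauss map $\nu:\pa G\to\SSN$ is surjective, so the function $u\mapsto V(K\cap H^+_u)$ is constant on the whole unit sphere. Applying this equality to an antipodal pair $u,-u$ and using $V(K\cap H^+_u)+V(K\cap H^+_{-u})=V(K)$ (the overlap lies in the negligible hyperplane $\pi_u$) forces the common value to be $V(K)/2$. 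The most delicate point of the whole argument is the cone-containment step that converts differentiability of $\pa G$ at $x$ into the quantitative inclusion used in the lower bound; once that is in place, the rest is a squeeze argument combined with the antipodal symmetry trick.
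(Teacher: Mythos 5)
Your proposal is correct and follows essentially the same route as the paper: the same rescaling $\de_K(x,r)=V\bigl(K\cap\tfrac1r(G-x)\bigr)/V(K)$, convergence of the rescaled sets to $K\cap H^+_{\nu(x)}$ (the paper invokes monotone convergence of the increasing family $\tfrac1r(G-x)\cap K$ where you use an explicit $\ep$-cone squeeze, a minor technical variant), and the identical surjective-Gauss-map/antipodal-pair argument for \eqref{halfC}. No gaps.
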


\begin{proof}
For $r>0$ we have:
\begin{equation}\label{sviluppo1}
    r^{-N}V( G\cap (x+r\,K))=V\left(\frac{G -x}{r}\cap K\right).
\end{equation}

Since $\pa G$ is differentiable at $x$, as $r$ decreases to $0,$ $\ds\frac{G -x}{r}\cap K$ 
increases to $H_{\nu(x)}^+\cap K.$ The first claim of the theorem then follows from the monotone
convergence theorem.
\par
Now, suppose that $G$ is $K$-dense. Then, the Gauss map from $\pa G$ to $\SSN$ 
that takes any $x\in\pa G$ to the outward normal unit vector $\nu(x)$ 
is surjective. Hence, for every $u\in\SSN,$ 
there exist $x,x'\in\pa G$ such that $u=\nu(x)=-\nu(x')$.
\par
Since $G$ is $K$-dense, then the quantity $V(K\cap H_{\nu(x)}^+)$ does not depend on $x$, for $x\in\pa G.$
Thus, our choice of $x$ and $x'$ enables us to write that
\[
V(K\cap H_{\nu(x)}^+)=V(K\cap H_{\nu(x')}^+)=V(K\cap H_{\nu(x)}^-).
\]
Since $V(K\cap H_{\nu(x)}^-)+V(K\cap H_{\nu(x)}^+)=V(K),$ 
then we find that 
\[
V(K\cap H_{\nu(x)}^+)=\frac12\,V(K).
\]
\end{proof}

\begin{cor}
\label{cor:2dreg}
If $G$ is $K$-dense, then
\begin{equation}
\label{centroids}
\int_{K\cap\pi_u}\langle y,w\rangle\,dy=0 \ \mbox{ for every } \ u, w\in\SSN \ \mbox{ with } \ \langle u,w\rangle=0. 
\end{equation}
In particular, when $N=2,$ $K$ is centrally symmetric.
\end{cor}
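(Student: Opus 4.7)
The key observation is that Theorem~\ref{1order} makes the map $u\mapsto V(K\cap H_u^+)$ \emph{constant} on $\SSN$. The plan is to differentiate this constant function along a curve on the sphere tangent to some $w\in\SSN$ with $\langle u,w\rangle=0$ and to recognise the resulting first-order coefficient as the integral in \eqref{centroids}, which must therefore vanish.

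To carry this out, fix $u,w\in\SSN$ with $\langle u,w\rangle=0.$ Since the half-space $H_v^+$ depends only on the ray through $v,$ applying \eqref{halfC} to the unit vector parallel to $u+\ep w$ gives $V(K\cap H_{u+\ep w}^+)=V(K)/2$ for every $\ep\in\RR.$ Writing $x=tu+y$ with $y\in\pi_u$ and using $\langle u,w\rangle=0,$ one has $\langle x,u+\ep w\rangle=t+\ep\langle y,w\rangle,$ whence for $\ep>0$
\begin{align*}
H_{u+\ep w}^+\setminus H_u^+ &=\{tu+y:\ \langle y,w\rangle>0,\ -\ep\langle y,w\rangle\le t<0\},\\
H_u^+\setminus H_{u+\ep w}^+ &=\{tu+y:\ \langle y,w\rangle<0,\ 0\le t<-\ep\langle y,w\rangle\}.
\end{align*}
Slicing in the $\RR u$-direction by Fubini, dividing by $\ep$ and passing to the limit (which is justified by dominated convergence, because the relative interior of $K\cap\pi_u$ is contained in $\mathrm{int}(K)$ and $K\cap\pi_u$ has $\cH^{N-1}$-negligible relative boundary), we obtain
\[
0=\lim_{\ep\to 0^+}\frac{V(K\cap H_{u+\ep w}^+)-V(K\cap H_u^+)}{\ep}=\int_{K\cap\pi_u}\langle y,w\rangle\,d\cH^{N-1}_y,
\]
which is precisely \eqref{centroids}.

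For $N=2,$ $\pi_u$ is the line $u^\perp,$ and choosing $w\in\SSN\cap\pi_u$ we may write $K\cap\pi_u=\{sw:a\le s\le b\}$ with $a<0<b$ (the strict inequalities follow because \eqref{halfC} already places the origin in the interior of $K$). Then $\int_{K\cap\pi_u}\langle y,w\rangle\,d\cH^1_y=(b^2-a^2)/2,$ whose vanishing forces $b=-a$; thus every chord of $K$ through the origin is bisected by $0,$ so $K=-K.$ The only delicate point in the argument is the interchange of limit and integration above, which boils down to checking that the $t$-length $\ep|\langle y,w\rangle|$ is actually attained in $K$ for every $y$ in the relative interior of $K\cap\pi_u$ and every sufficiently small $\ep$.
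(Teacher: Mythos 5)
Your proof is correct and takes essentially the same approach as the paper: both differentiate the halving condition $V(K\cap H_u^+)=V(K)/2$ from Theorem \ref{1order} along a rotation of $u$ toward an orthogonal $w$, and identify the first-order term as the first moment of $K\cap\pi_u$ by a thin-wedge/Fubini computation. The only organizational difference is that the paper first records the identity $V(K\cap H_v^+\cap H_u^+)=V(K\cap H_v^-\cap H_u^-)$ by inclusion--exclusion and then lets $v$ tend to $-u$, whereas you differentiate $V(K\cap H_{u+\ep w}^+)$ directly; your justification of the passage to the limit (the relative interior of $K\cap\pi_u$ lies in $\mathrm{int}(K)$ and its relative boundary is negligible) and the bisected-chords argument for $N=2$ are both sound.
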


\begin{proof}
Let $u$ and $v\in\SSN$, then:
\begin{eqnarray*}
&&V(K\cap H_{v}^+\cap H_u^+)+V(K\cap H_{v}^+\cap H_u^-)=V(K\cap H_{v}^+)=\\
&&V(K\cap H_{v}^-)=V(K\cap H_{v}^-\cap H_u^+)+V(K\cap H_{v}^-\cap H_u^-),
\end{eqnarray*}
and also
\begin{eqnarray*}
&&V(K\cap H_{v}^+\cap H_u^+)+V(K\cap H_{v}^-\cap H_u^+)=V(K\cap H_u^+)=\\
&&V(K\cap H_u^-)=V(K\cap H_{v}^+\cap H_u^-)+V(K\cap H_{v}^-\cap H_u^-).
\end{eqnarray*}
Thus,
\begin{equation}
\label{wedge}
V(K\cap H_{v}^+\cap H_u^+)=V(K\cap H_{v}^-\cap H_u^-).
\end{equation}
\par
Now fix $\ep>0,$ a unit vector $u$  and choose $v=-u\,\cos\ep+w\,\sin\ep,$ where $w$ is
a unit vector orthogonal to $u;$ we can write that
$$
K\cap H_{v}^+\cap H_u^+=\{ y+t u: \langle y,u\rangle=0, \|y+t u\|_K\le 1, 0\le t\le\langle y,w\rangle\tan\ep\}
$$ 
and, by a re-scaling in the variable $t,$ we get that
\begin{eqnarray*}
&\displaystyle\frac1{\ep}\,V(K\cap H_{v}^+\cap H_u^+)=\\
&\displaystyle\frac{\tan\ep}{\ep}\,V(\{ y+\tau u: \langle y,u\rangle=0, \|y+\tau u \tan\ep\|_K\le 1, 0\le\tau\le\langle y,w\rangle\}).
\end{eqnarray*}
As $\ep\to 0,$ $v\to u$ and we can easily infer that
\begin{eqnarray*}
&&\lim_{\ep\to 0}\frac1{\ep}\,V(H_{v}^+\cap H_u^+\cap K)=\\
&&\qquad\qquad V(\{ y+\tau u: \langle y,u\rangle=0, \|y\|_K\le 1, 0\le\tau\le\langle y,w\rangle\})=\\
&&\qquad\qquad\int_{K\cap\pi_u\cap H^+_w}\langle y,w\rangle\,dy.
\end{eqnarray*}
By the same argument, we obtain that 
\begin{eqnarray*}
\lim_{\ep\to 0}\frac1{\ep}\,V(H_{v}^-\cap H_u^-\cap K)=
-\int_{K\cap\pi_u\cap H^-_w}\langle y,w\rangle\,dy
\end{eqnarray*}
and hence \eqref{wedge} implies \eqref{centroids}.
\par
By a simple inspection of the proof, in the case $N=2$ we
easily obtain that $-K=K.$ 
\end{proof}

Theorem \ref{th:minksum} and Corollary \ref{cor:2dreg} immediately 
imply the following result.

\begin{cor}\label{2dreg}
Let $ G\subset\mathbb R^2$ be a $K$-dense body, then $ G\in C^{2,1}$.
\end{cor}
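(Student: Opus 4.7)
The plan is to show that the hypotheses of Theorem \ref{th:minksum} are automatically satisfied when $N=2$, and then simply read off the stated conclusion.

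First I would invoke Corollary \ref{cor:2dreg}. In its last assertion, that corollary tells us that when $N=2$ the $K$-density of $G$ forces $-K=K$, i.e.\ $K$ is centrally symmetric with respect to the origin. Since by standing assumption $K\in\cK_0^N$ (so the origin lies in the interior of $K$), this is exactly the symmetry hypothesis required by Theorem \ref{th:minksum}.

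With $K$ now known to be centrally symmetric, Theorem \ref{th:minksum} applies, and item (c) of that theorem gives directly that $\pa G$ is of class $C^{2,1}$, which is the desired conclusion.

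I do not expect a genuine obstacle, because the technical content has already been packaged into the two cited results: the identification $K=G-G$ (up to homothety) plus the fact that Minkowski sums preserve $C^{1,1}$-regularity upgrades $\pa K$ to $C^{1,1}$ inside Theorem \ref{th:minksum}, and then the inductive regularity result Theorem \ref{th:induc} bumps $\pa G$ from $C^{1,1}$ to $C^{2,1}$. Thus the corollary is a one-line concatenation: planar symmetry of $K$ (Corollary \ref{cor:2dreg}) $\Rightarrow$ hypotheses of Theorem \ref{th:minksum} met $\Rightarrow$ $\pa G\in C^{2,1}$.
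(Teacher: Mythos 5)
Your proposal is correct and is exactly the paper's argument: the authors state that Theorem \ref{th:minksum} and Corollary \ref{cor:2dreg} immediately imply the result, and your chain (planar central symmetry of $K$ from Corollary \ref{cor:2dreg}, then item (c) of Theorem \ref{th:minksum}) is precisely that deduction. Nothing further is needed.
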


We now compute the second term in the asymptotic expansion for $\de_K.$

\begin{thm}
Let $ G\subset\mathbb R^N$ be a convex body with $C^2$-smooth boundary, let $x\in\pa G$
and denote by  $\ka_1(x),\dots, \ka_{N-1}(x)$ the principal curvatures of $\pa G$ at $x$ with respect to
the inward normal unit vector. 
\par
Then, we have the formula:
\begin{equation}
\label{curvatures}
    \lim_{r\to 0^+}\frac{\de_K(x,r)-\de_0(x)}{r}=
    -\frac{1}{2\,V(K)}\,\sums_{i=1}^{N-1}m_i(x)\,\ka_i(x),
\end{equation}
where the coefficients $m_i(x)$ are given by \eqref{defmi}.
Therefore, \eqref{taylor} holds.
\end{thm}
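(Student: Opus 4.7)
My plan is to perform a blow-up analysis at the boundary point $x$. The change of variables $y=x+rz$ in the volume integral gives the scaling identity
\[
V(K)\,\de_K(x,r) = V\!\left(\frac{G-x}{r}\cap K\right),
\]
so I fix affine coordinates with $x=0$, $\nu(x)=e_N$, and the principal directions $e_i(x)$ along the first $N-1$ coordinate axes. The $C^2$-smoothness and convexity of $G$ then let me write $\pa G$ locally as the graph of a convex function $h:\RE^{N-1}\to\RE$ with
\[
h(0)=0,\qquad \na h(0)=0,\qquad D^2 h(0)=\mathrm{diag}(\ka_1(x),\dots,\ka_{N-1}(x)),
\]
and $h\geq 0$ because the tangent hyperplane at the origin is supporting. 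Setting $A_r=\bigl((G-x)/r\bigr)\cap K$ and $A_0=H^+_{\nu(x)}\cap K$, a point $z=(z',z_N)\in K$ belongs to $A_r$ iff $z_N\geq h(rz')/r$; since $h\geq 0$ near $0$ and $rK$ shrinks to $\{0\}$, one has $A_r\subseteq A_0$ for every $r$ small enough, whence
\[
V(A_0)-V(A_r) = V\!\left(\bigl\{z\in K : 0\leq z_N<h(rz')/r\bigr\}\right).
\]

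Next, I introduce $\ell:K\cap\pi_{\nu(x)}\to[0,\infty)$ defined by $\ell(z')=\sup\{s\geq 0 : (z',s)\in K\}$; the convexity of $K$ gives $\{s\geq 0:(z',s)\in K\}=[0,\ell(z')]$, so Fubini produces
\[
\frac{V(A_0)-V(A_r)}{r} = \int_{K\cap\pi_{\nu(x)}} \min\!\left(\frac{h(rz')}{r^2},\,\frac{\ell(z')}{r}\right)\,d\cH^{N-1}_{z'}.
\]
The $C^2$ Taylor formula for $h$ at the origin yields, uniformly in $z'$ on the compact set $K\cap\pi_{\nu(x)}$,
\[
\frac{h(rz')}{r^2}\to \frac12\sum_{i=1}^{N-1}\ka_i(x)\,(z_i')^2\quad\text{as }r\to 0^+,\qquad\text{with}\quad \frac{h(rz')}{r^2}\leq C\,|z'|^2
\]
for some $C>0$ and every $r$ sufficiently small. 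Moreover, for $\cH^{N-1}$-almost every $z'\in K\cap\pi_{\nu(x)}$ one has $\ell(z')>0$, so $\ell(z')/r\to+\infty$ and the minimum equals $h(rz')/r^2$ eventually. The dominated convergence theorem, with dominator $C|z'|^2$, then gives
\[
\lim_{r\to 0^+}\frac{V(A_0)-V(A_r)}{r} = \frac12\sum_{i=1}^{N-1}\ka_i(x)\int_{K\cap\pi_{\nu(x)}} (z_i')^2\,d\cH^{N-1}_{z'} = \frac12\sum_{i=1}^{N-1} m_i(x)\,\ka_i(x).
\]
Dividing by $V(K)$ and noting that $\de_K(x,r)-\de_0(x) = -(V(A_0)-V(A_r))/V(K)$ produces \eqref{curvatures}.

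The main obstacle is the rigorous domination step. One must verify that the uniform Taylor remainder of $h$ at the origin converts into a uniform $o(1)$ bound for $h(rz')/r^2$ on the compact cross-section $K\cap\pi_{\nu(x)}$ (which is immediate from $C^2$-regularity), and that the lateral behavior of $K$ near $\pa(K\cap\pi_{\nu(x)})$—encoded in the second argument $\ell(z')/r$ of the minimum—does not spoil the limit, thanks to the $r$-independent bound $\min(h(rz')/r^2,\ell(z')/r)\leq C|z'|^2$. No regularity of $\pa K$ is required in this step: only the convexity and boundedness of $K$.
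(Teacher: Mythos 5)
Your strategy coincides with the paper's: rescale by $r$, reduce the first-order term to the volume of the region trapped between the tangent hyperplane at $x$ and the graph of $\pa G$, and identify the limit of that volume (after the parabolic normalization $z_N\mapsto z_N/r^2$) as $\frac12\sum_i m_i(x)\ka_i(x)$. The paper packages the limit as a set-theoretic sandwich $S_0\subseteq\liminf S_t\subseteq\limsup S_t\subseteq\ovr S_0$ together with $V(S_0)=V(\ovr S_0)$, while you use Fubini plus dominated convergence; this is a cosmetic difference.

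There is, however, one step that is false as written: the identity
\[
\frac{V(A_0)-V(A_r)}{r}=\int_{K\cap\pi_{\nu(x)}}\min\Bigl(\frac{h(rz')}{r^2},\,\frac{\ell(z')}{r}\Bigr)\,d\cH^{N-1}_{z'}.
\]
The set $A_0\setminus A_r=\{z\in K:\,0\le z_N<h(rz')/r\}$ fibers over the orthogonal projection of $K\cap H^+_{\nu(x)}$ onto $\pi_{\nu(x)}$, which in general \emph{strictly} contains $K\cap\pi_{\nu(x)}$ (take $K$ a ball whose center does not lie on $\pi_{\nu(x)}$). Over a point $z'$ of that projection not belonging to $K\cap\pi_{\nu(x)}$, the fiber of $K$ in the upper half-space is an interval $[a(z'),b(z')]$ with $a(z')>0$, and it can still meet $[0,h(rz')/r)$; your formula silently discards these contributions, so as stated it is only a lower bound. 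The proof is repairable: the discarded term, after division by $r$, is bounded by $C\int_{\{0<a(z')<C'r\}}|z'|^2\,dz'$, and since the sets $\{0<a\le t\}$ decrease to the empty set as $t\downarrow 0$, this tends to $0$; alternatively, carry the exact constraint $\|z'+z_N\nu\|_K\le 1$ through the rescaling, as the paper does, and the issue is absorbed into the sandwich argument. Apart from this, your estimates are sound: the uniform bound $h(rz')/r^2\le C|z'|^2$ follows from $C^2$-regularity, and $\ell>0$ holds on the relative interior of $K\cap\pi_{\nu(x)}$ because the upper profile of $K$ is concave, nonnegative there, and positive at the origin.
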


\begin{proof}
We choose a coordinate system $\{e_1,\ldots,e_{N-1}, \nu\}$ around the point $x\in\partial G$ such that $e_i$, for $i=1,\ldots,N-1$, is the $i$-th principal direction of $\pa G$ at $x$ and $\nu=\nu(x)$ is the normal.
\par
In these coordinates $B_K(x,r)$ can be written as
\[
B_K(x,r)=\left\{x+\sum_{i=1}^{N-1}z_i e_i +z_N \nu: z\in\RN,\ \left\|\sum_{i=1}^{N-1}z_i e_i +z_N \nu\right\|_K\leq r \right\}.
\]
Also, in these same coordinates, $\pa G$ can be locally parametrized by a convex function $\psi\in C^2$
and, clearly, $\psi(0)=0$ and $\nabla \psi(0)=0.$ Furthermore, our choice of the axes $e_1,\ldots,e_N$ 
allow us to write that 
\[
\psi (z')=\frac{1}{2}\sum_{i=1}^{N-1}\ka_i(x)z_i^2+o(\|z'\|^2),
\]
for $z'=(z_1,\ldots,z_{N-1})\in\mathbb R^{N-1}$ in a sufficiently small neighborhood of $0.$
\par
We need to estimate the measure of the remainder set
$$
R(x,r)=B_K(x,r)\cap H_{\nu(x)} ^+\setminus G;
$$
for sufficiently small $r>0,$ $R(x,r)$ can be written as 
\[
\left\{ x+\sum_{i=1}^{N-1}z_i e_i+z_N \nu : 
\left\|\sum_{i=1}^{N-1}z_i e_i +z_N \nu\right\|_K\leq r,\,0\leq z_N\leq \psi(z'), z'\in V \right\},
\]
where $V$ is some neighborhood of $0$ in $\RE^{N-1}.$
Next, we make the following change of variables: $z_i=r\xi_i$, for $i=1,\ldots,N-1$ and $z_N=r^2\xi_N;$ since $\|\cdot\|_K$ is positively homogeneous, we get that
\[
V(R(x,r))=r^{N+1}V(S_r),
\]
where $S_r$ is the set
\[
\left\{\xi\in\RN:\,\xi'\in r^{-1}V,\,\left\|\sum_{i=1}^{N-1}\xi_i e_i+r\xi_N \nu\right\|_K\leq 1;\,0\leq\xi_N\leq \frac{\psi(r\xi_1,\ldots,r\xi_{N-1})}{r^2}\right\}.
\]
Now, if we define the set
\[
S_0 =\left\{\xi\in\RN : \left\|\sum_{i=1}^{N-1}\xi_i e_i(x)\right\|_K< 1;0\leq\xi_N< \frac{1}{2}\sum_{i=1}^{N-1}\ka_i(x)\,\xi_i^2\right\},
\]
we easily check that
$$
S_0\subseteq \bigcup_{r>0}\Bigl(\bigcap_{0<t<r} S_t\Bigr)\subseteq 
\bigcap_{r>0}\Bigl(\bigcup_{0<t<r} S_t\Bigl)\subseteq\ovr S_0.
$$
Since $V(S_0)=V(\ovr S_0),$ the smoothness assumptions on $\pa G$ give
the sufficient uniform boundedness to infer that
$$
    \lim_{r\to 0^+}\frac{V(R(x,r))}{r^{N+1}}=V(S_0).
$$
By the definition of $S_0,$ $V(S_0)$ is easily computed as
\[
V(S_0)=\int_{K\cap \pi_{\nu(x)}}\frac{1}{2}\sum_{i=1}^{N-1}\ka_i(x)\,\xi_i^2\, d\xi
     =\frac{1}{2}\sum_{i=1}^{N-1}m_i(x)\,\ka_i(x),
\]
that implies the desired formula \eqref{curvatures}.
\end{proof}

\begin{cor}\label{curvaturecor}
Let $G$ be a $C^2$-smooth $K$-dense body.
Then there exists a positive constant $\al$ such that \eqref{cond2} holds.
\end{cor}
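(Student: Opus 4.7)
The plan is to read off the corollary directly from the preceding asymptotic expansion together with the two consequences of $K$-density already extracted in this section, and then to check the sign of the resulting constant using the convexity of $G$.

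First I would note that, since $G$ is $K$-dense, the density $\delta_K(x,r)$ equals some function $c(r)$ for every $x\in\pa G$ and every $r>0$. By Theorem \ref{1order} applied at every $x\in\pa G$, $\delta_0(x)=V(K\cap H^+_{\nu(x)})/V(K)=1/2$ is also independent of $x$. Consequently the difference quotient $(\delta_K(x,r)-\delta_0(x))/r=(c(r)-1/2)/r$ does not depend on $x$, so its limit as $r\to 0^+$ is a single real number, say $-\alpha/2$. Combining this with formula \eqref{curvatures} from the theorem just proven gives
\[
-\frac{1}{2V(K)}\sum_{i=1}^{N-1} m_i(x)\,\kappa_i(x)=-\frac{\alpha}{2}\quad\text{for all }x\in\pa G,
\]
which is precisely \eqref{cond2} with constant $\alpha$.

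It remains to verify that $\alpha$ is positive. Each coefficient $m_i(x)$ is strictly positive: the set $K\cap\pi_{\nu(x)}$ is an $(N-1)$-dimensional convex body containing the origin in its relative interior (because $K\in\cK_0^N$), and the integrand $\langle\xi,e_i(x)\rangle^2$ vanishes only on a hyperplane of $\pi_{\nu(x)}$, so $m_i(x)>0$. By Theorem \ref{convexity}, $G$ is strictly convex, hence its principal curvatures satisfy $\kappa_i(x)\ge 0$ at every $x\in\pa G$; therefore $\sum_i m_i(x)\kappa_i(x)\ge 0$, which already yields $\alpha\ge 0$. To exclude $\alpha=0$, observe that the Gauss map of the compact convex hypersurface $\pa G$ is surjective onto $\SSN$, so the Gauss-Kronecker curvature $\kappa_1(x)\cdots\kappa_{N-1}(x)$ must be strictly positive at some $x_0\in\pa G$; at such a point every $\kappa_i(x_0)>0$ and, combined with $m_i(x_0)>0$, we get $\sum_i m_i(x_0)\kappa_i(x_0)>0$, forcing $\alpha>0$.

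I do not expect any genuine obstacle here; the corollary is essentially a bookkeeping consequence of the asymptotic expansion, with the only subtlety being the positivity of $\alpha$, which is handled by the surjectivity of the Gauss map for a bounded strictly convex body.
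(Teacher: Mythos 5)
Your proposal is correct and follows essentially the same route as the paper: the constancy of $\sums_i m_i(x)\,\ka_i(x)$ is read off from the $K$-density of $G$ together with the expansion \eqref{taylor}, and positivity comes from $m_i(x)>0$ plus the impossibility of all principal curvatures vanishing on the boundary of a bounded convex body. Your justification of $\al>0$ via a point of positive Gauss--Kronecker curvature is just a slightly more explicit version of the paper's contradiction argument, so there is nothing to add.
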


\begin{proof}
That the right-hand side of \eqref{cond2} does not depend on $x$ for $x\in\pa G$
clearly follows from \eqref{cdense} and \eqref{taylor}. 
Since $K$ is a convex body, then the $m_i(x)$'s are all positive;
if $\al$ were zero, then all the curvatures would be zero for every $x\in\pa G$
and this is impossible, since $G$ is a convex body. 
\end{proof}

\setcounter{equation}{0}

\section{Alternative proof of the conjecture in the two-dimensional case}

In this section, we present our new proof of the result of Amar, Berrone and Gianni \cite{ABG}.
We stress the fact that, besides dropping the smoothness assumptions needed in \cite{ABG},
our proof only needs the pointwise information given by \eqref{cond2}, since it 
relies on some global information provided by the Minkowski and affine isoperimetric
inequalities. So far, we were not able to reproduce this proof in general dimension.
\par
We need to introduce some terms and notations that we borrow from the theory of 
convex bodies (see \cite{Sc} and \cite{Lu2}, for instance). 
We limit our presentation to the case $N=2.$
\par
Given a convex body $K,$ we denote by $\rho_K$ and $h_K$ its {\em radial function} and
{\em support function,} respectively; by our notations, we have that $\rho_K(u)=1/\|u\|_K$ for $u\in\S1.$
The only moment of inertia $m=m_1$ in \eqref{defmi} can be easily computed and,
by setting $u=\nu(x),$ re-defined as a function on $\S1$ as 
\begin{equation}
\label{m2D}
m(u)=\frac23\,\rho_K(u^\perp)^3, \ u\in\S1, 
\end{equation}
where $u^\perp$ is the unit vector obtained from $u$ by a clockwise rotation of $90$ degrees.
\par
The {\em curvature function} $f_K$ of $K$ can be defined as a non-negative function on $\S1$ 
such that the {\em mixed volume} $V(K,G)$ can be written as
\begin{equation}
\label{mixed}
V(K,G)=\frac12\,\int_\S1 f_K(u)\,h_G(u)\,du,
\end{equation}
for every compact convex set $G.$
When $K$ is smooth, $f_K(u)$ is the reciprocal of the curvature $\ka_K$ of $\pa K$
at the point on $\pa K$ at which the normal unit vector equals $u.$
The Minkowski's first inequality for mixed volumes tells us that
\begin{equation}
\label{AF}
V(K,G)\ge \sqrt{V(K)\,V(G)};
\end{equation}
the sign of equality holds if and only if $K$ and $G$ are homothetic.
\par
We recall that the {\em affine area} $\Om(K)$ of $K$ is defined by
\begin{equation}
\label{affine}
\Om(K)=\int_\S1 f_K(u)^{2/3}\, du;
\end{equation}
we will make use of an inequality, that relates $\Om(K),$
$V(K)$ and the volume of the polar set $K^*$ (with respect to the origin) of $K$ 
and can be found in \cite{H} or \cite{Lu3}:
\begin{equation}
\label{lutwak}
\Om(K)^3\le 8\, V(K)^2 V(K^*);
\end{equation}
here, the sign of equality holds if and only if there exists a positive constant $\lambda$ such that $f_K(u)=\lambda h_K(u)^{-3}$, for all $x\in\partial K$.\\
In \cite{Pe} Petty proves that the latter condition holds if and only if $K$ is an ellipse.


\begin{thm}
\label{th:homothetic}
Let $K\subset\mathbb R^2$ be a convex body. If $G\subset\mathbb R^2$ is a $K$-dense body,
then $G$ and $K$ are homothetic and both $\pa K$ and $\pa G$ are $C^\infty$-smooth.
\end{thm}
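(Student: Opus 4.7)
The plan is to combine the pointwise curvature identity of Corollary~\ref{curvaturecor} with Minkowski's first inequality \eqref{AF}, in order to force central symmetry on $G$; the homothety between $K$ and $G$ is then immediate from Theorem~\ref{th:minksum}(a). First, since $N=2$, Corollary~\ref{cor:2dreg} gives $K=-K$, and Theorem~\ref{th:minksum}(a), together with a harmless rescaling of $K$ (which preserves $K$-density), allows us to assume $K=G-G$. Corollary~\ref{2dreg} supplies $\pa G\in C^{2,1}$, so the curvature function $f_G$ (the reciprocal of $\kappa_G$, read as a function of the outward normal) is well-defined pointwise, and substituting \eqref{m2D} into Corollary~\ref{curvaturecor} yields a positive constant $c$ with
\[
f_G(u)=c\,\rho_K(u^\perp)^3,\qquad u\in\mathbb{S}^1.
\]

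The heart of the argument is to compute the mixed volume $V(K,G)$ in two different ways and match them. On one hand, $K=G+(-G)$ together with the additivity of the surface area measure gives $f_K=f_G+f_{-G}$; the central symmetry of $K$ implies $\rho_K(-u^\perp)=\rho_K(u^\perp)$, whence $f_{-G}(u)=f_G(-u)=c\,\rho_K(u^\perp)^3=f_G(u)$, and thus $f_K=2c\,\rho_K(u^\perp)^3$. Plugging this into the mixed-volume formula $V(K,G)=\tfrac{1}{2}\int_{\mathbb{S}^1} f_K h_G\,du$ and comparing with $V(G)=\tfrac{1}{2}\int_{\mathbb{S}^1} f_G h_G\,du$ gives the sharp identity $V(K,G)=2V(G)$. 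On the other hand, by linearity of mixed volumes, $V(K,G)=V(G-G,G)=V(G)+V(-G,G)$, and Minkowski's inequality \eqref{AF} applied to the pair $(-G,G)$ yields $V(-G,G)\geq\sqrt{V(-G)V(G)}=V(G)$, hence $V(K,G)\geq 2V(G)$ with equality iff $-G$ and $G$ are homothetic. Since $V(-G)=V(G)$, the homothety ratio must equal $1$, so $-G$ is just a translate of $G$: $G$ is centrally symmetric about some point $g_0$.

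Once central symmetry of $G$ is established, $G-G=2(G-g_0)$, so the normalisation $K=G-G$ reads $K=2(G-g_0)$, that is, $K$ and $G$ are homothetic. The $C^\infty$ conclusion then follows at once from Corollary~\ref{bootstrap}: since $G$ lies in the homothety class of $K$, iterating Theorem~\ref{th:induc} from the base step $\pa K\in C^{1,1}$ provided by Theorem~\ref{th:minksum}(c) upgrades $\pa K$ and $\pa G$ to $C^{m,1}$ for every $m$.

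The main obstacle is precisely the passage from the local identity $f_G=c\,\rho_K(u^\perp)^3$ to the equality case of a global inequality: one must recognise that this pointwise curvature formula, combined with the structural relation $K=G-G$, forces the exact value $V(K,G)=2V(G)$, which is precisely the lower bound produced by Minkowski's inequality, and so activates the equality case that delivers the central symmetry of $G$. A subsidiary technical point is the identity $f_K=f_G+f_{-G}$ at the $C^{1,1}$ regularity of $\pa K$ that we currently have, but this is a direct consequence of the Minkowski additivity of the surface area measures $S_K=S_G+S_{-G}$ and presents no real difficulty.
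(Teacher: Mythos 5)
Your proof is correct, and it hinges on exactly the same identity as the paper's: combining Corollary~\ref{curvaturecor} with \eqref{m2D} to get $f_G(u)=c\,\rho_K(u^\perp)^3$, and then using the central symmetry of $K$ (Corollary~\ref{cor:2dreg}) to deduce that $f_G(-u)=f_G(u)$. Where you genuinely diverge is in the step from the evenness of $f_G$ to the central symmetry of $G$. The paper simply asserts that $f_G(-u)=f_G(u)$ ``means that $G$ is centrally symmetric'' --- implicitly invoking the uniqueness part of Minkowski's theorem, that a planar convex body is determined up to translation by its curvature function. You instead reprove this implication explicitly: normalising $K=G-G$ via Theorem~\ref{th:minksum}(a), using the Minkowski additivity of surface area measures to get $f_K=2f_G$ and hence $V(K,G)=2V(G)$, then comparing with $V(K,G)=V(G)+V(-G,G)$ and invoking the equality case of \eqref{AF} for the pair $(-G,G)$. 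This is in essence the standard proof of the uniqueness statement, so the argument is sound, and it has the merit of being self-contained and of using only tools the paper already quotes (\eqref{AF}, \eqref{mixed}, Theorem~\ref{th:minksum}(a)); the cost is a somewhat longer route. All the supporting facts you use hold at the regularity available ($\pa G\in C^{2,1}$ by Corollary~\ref{2dreg}, so $f_G=1/\ka_G$ is literally defined and positive by \eqref{cond2}; surface area measures are Minkowski-additive in the plane, so $f_K=f_G+f_{-G}$ makes sense even though $\pa K$ is a priori only $C^{1,1}$). Your endgame also differs cosmetically: you obtain the homothety directly from $K=G-G=2(G-g_0)$ and bootstrap the smoothness via Corollary~\ref{bootstrap}, whereas the paper packages both conclusions into Corollary~\ref{cor:omegasim}; the two are equivalent.
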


\begin{proof}
Since $K$ is centrally symmetric by Corollary \ref{cor:2dreg},
then Corollary \ref{centroids} implies that formula \eqref{cond2} holds and, by \eqref{m2D} 
and in view of the geometric meaning of the curvature function, 
can be written as
\begin{equation}
\label{cond22d}
\rho_K(u^\perp)^3=c\,V(K)\,f_G(u), \ u\in\S1,
\end{equation}
(with a slight abuse of notation) where $c$ is some positive constant.
Also, being $K$ centrally symmetric, $\rho_K(-u^\perp)=\rho_K(u^\perp)$ 
and hence $f_G(-u)=f_G(u)$ for every $u\in\S1;$ this means that also $G$ is centrally symmetric.
\par
Thus, by Corollary \ref{cor:omegasim}, $K$ and $G$ differ by a homothety and 
both $\pa K$ and $\pa G$ are $C^\infty$-smooth.
\end{proof}


\begin{proof}[Proof of Theorem \ref{th:ellipse}.]
In view of Theorem \ref{th:homothetic}, we know that $G$ and $K$ have smooth 
boundaries and only differ by a homothety; without loss of generality, we shall 
assume that $G=K.$ Thus, \eqref{cond22d} reads:
\begin{equation}
\label{cond22df}
\rho_K(u^\perp)^3=c\,V(K)\,f_K(u), \ u\in\S1.
\end{equation}
\par
Our goal is to show that \eqref{cond22df} leads inequality \eqref{lutwak} into an equality; then we shall conclude that $K$ is an ellipse.
\par
By a well-known formula, we then compute:
\begin{eqnarray*}
&&2 V(K)=\int_\S1 \rho_K(u)^2 du=\int_\S1 \rho_K(u^\perp)^2 du=\\
&&\qquad\qquad [c\, V(K)]^{2/3} \int_\S1 f_K(u)^{2/3}\, du=
[c\, V(K)]^{2/3}\, \Om(K),
\end{eqnarray*}
that gives:
$$
c^{-2}=\frac{\Om(K)^3}{8 V(K)}.
$$
\par
On the other hand, by the definition \eqref{mixed}, \eqref{cond22df} also gives:
\begin{eqnarray*}
&&V(K, (K^*)^\perp)=\frac12\,\int_\S1 f_K(u)\,h_{K^*}(u^\perp)\,du=
\frac12\,\int_\S1 \frac{f_K(u)}{\rho_K(u^\perp)}\,du=\\
&&\qquad[c\, V(K)]^{-1}\,\frac12\,\int_\S1 \rho_K(u^\perp)^2\,du=
c^{-1}
\end{eqnarray*}
where we have used the well-known fact that $h_{K^*}=1/\rho_K.$ 
\par
Therefore, by applying \eqref{AF} and \eqref{lutwak} successively, we obtain that
\begin{eqnarray*}
&&\frac{\Om(K)^3}{8 V(K)}=c^{-2}=V(K, (K^*)^\perp)^2\ge V(K)\,V((K^*)^\perp)=\\
&&\qquad\qquad V(K)\,V(K^*)\ge \frac{\Om(K)^3}{8 V(K)}\,,
\end{eqnarray*}
that is Aleksandrov-Fenchel inequality holds with the sign of equality, which
means that $K$ and $(K^*)^\perp$ are homothetic. This concludes the proof.
\end{proof}


\begin{thebibliography}{20}

\bibitem{Al1} {\sc G.~Alessandrini},
{\em Matzoh ball soup: a symmetry result for the heat equation},
J. Analyse Math. 54 (1990), 229--236.

\bibitem{Al2} {\sc G.~Alessandrini}, {\em Characterizing spheres
by functional relations
on solutions of elliptic and parabolic equations}, Applicable Anal. 40
(1991), 251--261.

\bibitem{ABG} M. Amar, L.R. Berrone, R. Gianni, \emph{A non local quantitative characterization of ellipses leading to a solvable differential relation}, J. Inequal. in Pure \& Appl. Math. Vol. 9, 2008, 14 pp.

\bibitem{B1} J. Boman, \emph{The sum of two plane convex $\mathcal C^\infty$ sets is not always $\mathcal C^5$}, Math. Scand. 66, 1990, 216-224.

\bibitem{H} D. Hug, \emph{Contributions to affine surface area}, Manuscripta Math. 91, 1996, 283–301.

\bibitem{K1} C. O. Kiselman, \emph{How smooth is the shadow of a smooth convex body?}, J. london Math. Soc. (2) 33, 1986, 101-109.

\bibitem{K2} C. O. Kiselman, \emph{Smoothness of vector sums of plane convex sets}, Math. Scand. 60, 1987, 239-252.

\bibitem{Kl} {\sc M.~S.~Klamkin}, {\em A physical characterization of a
sphere}, (Problem 64-5$^*$) SIAM Review 6 (1964), 61; also in Problems
in Applied Mathematics. Selection from SIAM Review. Edited by M.S. Klamkin. 
SIAM, Philadelphia, PA. 1990.

\bibitem{KP} S. Krantz, H. Parks, \emph{On the vector sum of two convex sets in space}, Can. J. Math., 43 (2), 1991, 347-355.

\bibitem{LL} E. Lieb, M. Loss, {\em Analysis,} American Mathematical Society

\bibitem{Lu1} E. Lutwak, \emph{Extended affine surface area}, Adv. Math. 85 (1), 1991, 39–68.

\bibitem{Lu2} E. Lutwak, \emph{Selected affine isoperimetric inequalities}, Handbook of Convex Geometry 
(P. M. Gruber and J. M. Wills eds.), North-Holland, Amsterdam 1993, pp. 151--176.

\bibitem{Lu3} E. Lutwak, \emph{The Brunn-Minkowski-Firey Theory II: Affine and Geominimal Surface Area},  Adv Math. 118, 1996, 244-294.

\bibitem{MPS} R. Magnanini, J. Prajapat, S. Sakaguchi, \emph{Stationary isothermic surfaces and uniformly dense domains}, Trans. Am. Math. Soc. 385, 2006, 4821-4841.

\bibitem{MS1} R.~Magnanini, S.~Sakaguchi, {\em Matzoh ball soup: heat
conductors with a stationary isothermic surface}, Ann. Math. 156 (2002), 931--946.

\bibitem{MS2} R.~Magnanini, S.~Sakaguchi, Stationary isothermic surfaces for unbounded domains, 
Indiana Univ. Math. Journ. 56, no. 6 (2007), 2723-2738.

\bibitem{MS3} R.~Magnanini, S.~Sakaguchi, {\em Nonlinear diffusion with a bounded 
stationary  level surface}, Ann. Inst. Henri Poincar\'e Analyse Nonlin\'eaire 27 (2010), 937-952. 

\bibitem{MS4} R.~Magnanini, S.~Sakaguchi, {\em Interaction between nonlinear diffusion and geometry of domain}, J. Diff. Eqs. 252 (2012), 236--257.

\bibitem{MS5} R.~Magnanini, S.~Sakaguchi, {\em Matzoh ball soup revisited: the boundary regularity issue,} 
preprint arxiv:1103.6229, to appear in Math. Meth. Appl. Sci.

\bibitem{Pe} C. M. Petty, \emph{Affine isoperimetric problems}, Ann. N.Y. Acad. Sci. 440, 1985, 113-127  

\bibitem{R} R.T. Rockafellar, \emph{Convex analysis}, Princeton University Press, Princeton N.J., 1970.

\bibitem{Sa} S. Sakaguchi, {\em When are the spatial level
surfaces of solutions of diffusion equations invariant with respect
to the time variable?}, J. Analyse Math. 78 (1999), 219--243.

\bibitem{San} L. A. Santal\' o, \emph{Un invariante afin para los cuerpos convexos del espacio de $N$ dimensiones}, Portugal. Math. 8, 1949, 155-161.

\bibitem{Sc} R. Schneider, \emph{Convex bodies: the Brunn-Minkovski theory}, Cambridge University Press, Cambridge, 1993.

\end{thebibliography}
\end{document}